\newtheorem{theorem}{Theorem}
\newtheorem{theorem*}{Theorem}
\newtheorem{lemma}{Lemma}
\newtheorem{property}{Property}
\newcommand{\RR}{\mathbb{R}}
\newcommand{\ZZ}{\mathbb{Z}}
\newcommand{\HH}{\mathbb{H}}
\numberwithin{equation}{section}
\numberwithin{theorem}{section}
\numberwithin{lemma}{section}
\numberwithin{example}{section}
\begin{document}
    
\title[Fractional Hardy's inequality for half spaces in $\HH^n$]{Fractional Hardy's inequality for half spaces in the Heisenberg group}

    \author{Rama Rawat}
    \email{rrawat@iitk.ac.in}

    \author{Haripada Roy}
    \email{haripada@iitk.ac.in}

    \address{Indian Institute of Technology Kanpur, India}

    \keywords{Heisenberg group; Fractional Sovolev spaces; Fractional Hardy inequality}
    \subjclass[2020]{46E35; 26D15; 43A80}
    \date{}

    \smallskip
\begin{abstract}
We establish the following fractional Hardy's inequality
$$\int_{\HH^n_+}\frac{|f(\xi)|^p}{x_1^{sp}|z|^\alpha}d\xi\leq C\int_{\HH^n_+}\int_{\HH^n_+}\frac{|f(\xi)-f(\xi')|^p}{d({\xi}^{-1}\circ \xi')^{Q+sp}|z'-z|^\alpha}d\xi'd\xi,\ \ \forall\,f\in C_c(\HH^n_+)$$
for the half space $\HH^n_+:=\{\xi=(z,t)=(x_1,x_2,\ldots, x_n, y_1,y_2,\ldots,y_n,t)\in\HH^n:x_1>0\}$ in the Heisenberg group $\HH^n$ under the conditions $sp>1$ and $\alpha\geq (2n+sp)/2$. We also provide an alternate proof of a fractional Hardy's inequality in $\HH^n$ established in an earlier work.
\end{abstract}
    
    \maketitle


\section{Introduction}
Hardy's inequality on the Heisenberg group was first established by Garofalo and Lanconelli in \cite{Garofalo}. They established the following result: Let $u\in C_c^{\infty}(\mathbb{H}^n\setminus \{0\})$. Then
\begin{equation}\label{eq garo}
\int_{\mathbb{H}^n}\left(\frac{|z|}{d(\xi)}\right)^2\frac{|u|^2}{d(\xi)^2}\leq \Big(\frac{2}{Q-2}\Big)^2\int_{\mathbb{H}^n}|\nabla_{\mathbb{H}^n}u|^2d\xi.
\end{equation}
Many generalizations of this inequality were obtained by various authors. For these, we refer to the works in \cite{Adimurthi_Sekar_2006,D'Ambrosio,Pengcheng3,Xiao}. However, our literature search for fractional Hardy type inequalities for subdomains, or specifically for the half spaces of $\HH^n$, came to nothing. This work is an attempt in this direction. Unlike the Euclidean case, the non-isotropic guage $d(\xi)$ in $\HH^n$ forces that the distance function behaves differently for the hyperplanes 
$\HH^n_{+}$ and $\HH^n_{+,t}$ which are defined later.
Following closely the technique in \cite{Dyda}, we have been able to establish a fractional Hardy type inequality for the half space $\HH^n_+$, although this approach does not allow us to compute the corresponding sharp constant, in this case. However, we plan to address this in our next work. The corresponding questions for the hyperplane $\HH^n_{+,t}$ remain open due to the lack of a closed form of the corresponding distance function for this hyperplane. We come back to this point in a while, and also in the Appendix in Section 4.\smallskip

To recall , in 1925, G. H. Hardy established a fundamental integral inequality for non-negative measurable functions on $\RR_+$, known as the ``Hardy's inequality". The generalization of Hardy's inequality for the Euclidean space $\RR^n$ ($n\geq2$) is of the form 
\begin{equation}
\int_{\RR^n}\frac{|u(x)|^p}{|x|^p}dx\leq \left|\frac{p}{n-p}\right|^p\int_{\RR^n}|\nabla u(x)|^pdx,
\end{equation}
which holds for all $u\in C_c^\infty(\RR^n)$ if $1<p<n$ and for all $u\in C_c^\infty(\RR^n\setminus\{0\})$ if $p>n$. The constant $|\frac{p}{n-p}|^p$ was shown to be sharp in \cite{Azorero} for the case $1<p<n$ and in \cite[pp. 9]{Alexander} for the case $p>n$. Let $\Omega$ be a domain in $\RR^n$, $n\geq1$, with non-empty boundary and let us define the the distance function from the boundary of $\Omega$ by $\delta_\Omega(x)=\mathrm{dist}(x,\partial\Omega)=\inf\{|x-y|:y\in\partial\Omega\}$. The following variant of Hardy's inequality was established by Alano Ancona in \cite{Ancona} for a sufficiently regular domain $\Omega$:
\begin{equation}\label{bdry har}
\int_\Omega\frac{|u(x)|^2}{\delta_\Omega(x)^2}dx\leq C_\Omega\int_\Omega|\nabla u(x)|^2dx\ \ \forall\,u\in C_c^\infty(\Omega).
\end{equation}
Later on, this inequality was generalized by several authors for various domains, such as bounded convex domains, bounded domains with smooth or Lipschitz boundaries, the upper half-spaces etc. and the corresponding best constants were also discussed in e.g., \cite{Hoffmann,Tanya,Tidblom,Tidblom2}.\smallskip

Analogous versions of these inequalities on fractional Sobolev spaces have also been studied for several domains. We refer \cite{Bourgain,Hitchhiker,Frank,Mazya} for introduction of the fractional Sobolev spaces, its connection with the classical spaces and for the Hardy's inequality for fractional Sobolev spaces. Let $ p>0,$  $0<s<1$ and let $\Omega$ be as before a domain in $\RR^n$, $n\geq1$. In \cite{Dyda}, B. Dyda established the fractional Hardy's inequality
\begin{equation}\label{eq Dyda}
\int_\Omega\frac{|u(x)|^p}{\delta_\Omega(x)^{sp}}dx\leq C\int_\Omega\int_\Omega\frac{|u(x)-u(y)|^p}{|x-y|^{n+sp}}dydx\ \ \forall\,u\in C_c^\infty(\Omega),
\end{equation}
for the following cases:\\
(1) $\Omega$ is a bounded Lipschitz domain and $sp>1$;\\
(2) $\Omega$ is the complement of a bounded Lipschitz domain, $sp\neq1$ and $sp\neq n$;\\
(3) $\Omega$ is a domain above the graph of a Lipschitz function $\RR^{n-1}\rightarrow\RR$ and $sp\neq1$;\\
(4) $\Omega$ is the complement of a point, and $sp\neq n$.\smallskip

Inequality \eqref{eq Dyda} for the critical case $sp=1$ is addressed in \cite{Purbita_Adi_Pro} for dimension one and in \cite{Adi_Vivek_Pro} for higher dimensions. L. Brasco and E. Cinti established another variant of fractional Hardy's inequality for convex domain $\Omega\neq\RR^n$ in \cite{BraCin} without imposing any restriction on the quantity $sp$. Inequality \eqref{eq Dyda} for the upper half space $\RR^n_+=\{x\in\RR^n:x_n>0\}$ and the corresponding sharp constant
\begin{equation}\label{best constant}
C_{n,p,s}:=2\pi^{\frac{n-1}{2}}\frac{\Gamma((1+sp)/2)}{\Gamma((n+sp)/2)}\int_0^1\left|1-r^{(sp-1)/p}\right|^p\frac{dr}{(1-r)^{1+sp}}
\end{equation}
was first established by Bogdan and Dyda in \cite{Dyda2} for the case $p=2$ under the condition $sp\neq1$. The results for general $p$ were later extended by Frank and Seiringer in \cite{Frank2}.\smallskip

In this paper our focus will be on inequality like \eqref{eq Dyda} for the half spaces in the Heisenberg group. Before going to the main results, let us recall needed background on the Heisenberg group. Let $n\geq1$. The Heisenberg group $\HH^n$ is defined by
$$\HH^n:=\{\xi=(z,t)=(x,y,t):z=(x,y)\in \RR^n\times \RR^n\ \mathrm{and}\ t\in \RR\},$$
with the following group law:
$$\xi\circ\xi'=(x+x',y+y',t+t'+2\langle y,x'\rangle-2\langle x,y'\rangle),$$
where $\xi=(x,y,t)$, $\xi'=(x',y',t')\in \HH^n$ and $\langle\cdot,\cdot\rangle$ denotes the usual Euclidean inner product in $\RR^n$. One can easily check that $0\in\HH^n$ is the identity element and $-\xi=(-z,-t)$ is the inverse of $\xi\in\HH^n$. A basis for the left invariant vector fields is given by
$$X_i=\frac{\partial}{\partial x_i}+2y_i\frac{\partial}{\partial t},\ \ 1\leq i\leq n,$$
$$Y_i=\frac{\partial}{\partial y_i}-2x_i\frac{\partial}{\partial t},\ \ 1\leq i\leq n,$$
$$T=\frac{\partial}{\partial t}.$$
For any function $u$ of $C^1$ class, $\nabla_{\HH^n}(u):=\left(X_1(u),\ldots,X_n(u),Y_1(u),\ldots,Y_n(u)\right)$ is the associated Heisenberg sub-gradient of $u$ and we define
$$|\nabla_{\HH^n}(u)|^2=\sum_{i=1}^{n}\left(|X_i(u)|^2+|Y_i(u)|^2\right).$$
For $\xi=(x,y,t)=(z,t)\in\HH^n$, $d(\xi)=\left((|x|^2+|y|^2)^2+t^2\right)^{\frac{1}{4}}=\left(|z|^4+t^2\right)^{\frac{1}{4}}$ is called the Koranyi-Folland non-isotropic gauge. The left-invariant Haar measure on $\HH^n$ is the Lebesgue measure on $\RR^{2n+1}$. For any measurable set $A\subset\HH^n$, we define by $|A|$, the Lebesgue measure of $A$. The dilations on $\HH^n$ are defined by
$$D_r\xi:=(rx,ry,r^2t),\ \ \mathrm{for}\ \xi=(x,y,t)\in \mathbb{H}^n\ \mathrm{and}\ r>0.$$
The determinant of the Jacobian matrix of $D_r$
is $r^Q$, where $Q=2n+2,$ is called the homogeneous dimension of $\mathbb{H}^n$. We define the ball centered at $\xi$ with radius $r$ in $\HH^n$ as
\begin{equation}\label{set B_r}
B(\xi,r)=\{\xi'=(z',t'):d({\xi}^{-1}\circ \xi')<r\}.
\end{equation}
and the set
\begin{equation}\label{set B_z_r}
B_z(\xi,r)=\{\xi'=(z',t'):|z'-z|<r\},
\end{equation}
is an infinite cylinder of radius $r$, parallel to the $t$ direction, whose axis passes through $\xi$.\smallskip

The distance function for a domain $\Omega\subset\HH^n$ with non-empty boundary is defined by
$$\delta_\Omega(\xi)=\begin{cases}
\underset{\xi'\in\partial\Omega}{\mathrm{inf}}d(\xi^{-1}\circ\xi') & \text{if $\xi\in\Omega$}\\
0 & \text{if $\xi\in\HH^n\setminus\Omega$}.
\end{cases}$$
Let us define the following half spaces in $\HH^n$:
$$\HH^n_{+}:=\{\xi=(z,t)=(x,y,t)=(x_1,x_2,\ldots,x_n, y_1,y_2,\ldots,y_n,t)\in\HH^n:x_1>0\}$$
and
$$\HH^n_{+,t}:=\{\xi=(z,t)=(x,y,t)\in\HH^n:t>0\}.$$
It is easy to see that $\delta_{\HH^n_{+}}(\xi)=x_1$ by choosing $\xi'=(0,x_2,\ldots,x_n,y,t-2x_1y_1)\in\partial\HH^n_+$, whereas the distance function $\delta_{\HH^n_{+,t}} (\xi) \leq \sqrt{t}$ for the half space $\HH^n_{+,t}$. Indeed, the distance from the boundary of $\HH^n_{+,t}$ is $\sqrt{t}$ for $\xi=(0,0,t)\in \HH^n_{+,t} $ but the value could be strictly less than $\sqrt{t}$ for $\xi=(z,t)$ in some subsets of $\HH^n_{+,t}$, see the Appendix. Moreover, we do not know a closed form for the distance function in this case.\smallskip

The inequality \eqref{eq garo} of Garofalo and Lanconelli in \cite{Garofalo} has been followed by establishing versions of Hardy's inequalities for subdomains in the Heisenberg group, much like inequalities of the form \eqref{bdry har} for the Euclidean setting. This includes the work in \cite{Pengcheng} where the authors prove a version of such inequalities for half spaces $\HH^n_{+},$ with a remainder term. Likewise in \cite{Luan}, the authors prove a version for half space $\HH^n_{+,t},$ with the difference that the distance function is being replaced by the function $\sqrt{t}.$ The work in \cite{Larson}, contains a version of these Hardy inequalities for convex domains in $\HH^n,$
convexity being defined in the Euclidean sense and the distance function is being replaced by a weighted Euclidean distance.

Let $1\leq p<\infty$, $s\in(0,1)$ and $\alpha\geq0$. For any open set $\Omega\subset\HH^n$, we consider the following fractional Sobolev space
$$W^{s,p,\alpha}(\Omega):=\left\{f\in L^p(\Omega):[f]_{s,p,\alpha,\Omega}:=\left(\int_{\Omega}\int_{\Omega}\frac{|f(\xi)-f(\xi')|^p}{d({\xi}^{-1}\circ \xi')^{Q+sp}|z-z'|^{\alpha}}d\xi' d\xi\right)^{\frac{1}{p}}<\infty\right\},$$
the norm is defined by $\lVert f\rVert_{s,p,\alpha,\Omega}^p=\lVert f\rVert_{L^p(\Omega)}^p+[f]_{s,p,\alpha,\Omega}^p$. The space $W_0^{s,p,\alpha}(\Omega)$ is the closure of $C_c^\infty(\Omega)$ under the norm $\lVert f\rVert_{s,p,\alpha,\Omega}$. The space $W_0^{s,p,\alpha}(\HH^n)$ was first introduced in \cite{Adi-Mallick}. The following fractional Hardy type inequality for the function $f\in W_0^{s,p,\alpha}(\HH^n)$ was established in \cite{Adi-Mallick} under the condition $sp+\alpha<Q$ together with $sp>2$:
\begin{equation}\label{eq adi}
\int_{\HH^n}\frac{|f(\xi)|^p}{d(\xi)^{sp}|z|^{\alpha}}d\xi\leq C_{n,p,s,\alpha}\int_{\HH^n}\int_{\HH^n}\frac{|f(\xi)-f(\xi')|^p}{d({\xi}^{-1}\circ \xi')^{Q+sp}|z'-z|^{\alpha}}d\xi'd\xi.
\end{equation}
The case $\alpha=0$ was established in \cite{Suragan}. Recently, the range of the indices has been improved as a particular case of fractional Caffarelli-Kohn-Nirenberg type inequalities in \cite{Hari}, which is as follows:\\
(i) The inequality \eqref{eq adi} holds for the functions of $W_0^{s,p,\alpha}(\HH^n)$ if, either $sp+\alpha<Q-2$, or $sp+\alpha<Q$ with $sp\geq2$.\\
(ii) The inequality \eqref{eq adi} holds for the functions of $W_0^{s,p,\alpha}(\HH^n\setminus\{\xi\in\HH^n:z=0\})$ if $sp+\alpha>Q-2$.\smallskip

Our main result is the following inequality.

\begin{theorem}\label{thm 1.1}
Let $p\geq1$ and $s\in(0,1)$ be such that $sp>1$ and $\alpha\geq (Q-2+sp)/2$. Then there exists a constant $C>0$ depending only on $n$, $p$, $s$ and $\alpha$, such that
\begin{equation}\label{eq main 1}
\int_{\HH^n_+}\frac{|f(\xi)|^p}{x_1^{sp}|z|^\alpha}d\xi\leq C\int_{\HH^n_+}\int_{\HH^n_+}\frac{|f(\xi)-f(\xi')|^p}{d({\xi}^{-1}\circ \xi')^{Q+sp}|z'-z|^\alpha}d\xi'd\xi\ \ \forall\,f\in C_c(\HH^n_+).
\end{equation}
\end{theorem}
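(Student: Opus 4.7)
The plan is to adapt Dyda's averaging technique to the Heisenberg half-space. The core strategy is a pointwise estimate for $|f(\xi)|^p$ using a carefully chosen ``shadow'' region, followed by integration and a dyadic absorption step.

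For each $\xi=(z,t)\in\HH^n_+$ with $r:=x_1>0$, I would introduce the Heisenberg shadow
\[
A(\xi):=\{\xi'=(z',t')\in\HH^n_+: c_1 r\leq d(\xi^{-1}\circ\xi')\leq c_2 r,\ x_1'\geq 2x_1\},
\]
with constants $c_1<c_2$ to be fixed later. By the triangle inequality, for every $\xi'\in A(\xi)$,
\[
|f(\xi)|^p\leq 2^{p-1}\bigl(|f(\xi)-f(\xi')|^p+|f(\xi')|^p\bigr).
\]
Averaging this against $d\mu(\xi'):=d\xi'/(d(\xi^{-1}\circ\xi')^{Q+sp}|z-z'|^\alpha)$ on $A(\xi)$ and dividing by $I(\xi):=\int_{A(\xi)}d\mu(\xi')$ yields a pointwise estimate on $|f(\xi)|^p$. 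The key weight computation is $I(\xi)\geq c/(x_1^{sp}|z|^\alpha)$, obtained by bounding the integrand below using $d(\xi^{-1}\circ\xi')\leq c_2 r$ and $|z-z'|\leq d\leq c_2 r$ on $A(\xi)$, together with the Heisenberg scaling $|A(\xi)|\asymp r^Q$ and the trivial inequality $|z|\geq x_1=r$.

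Integrating the pointwise bound over $\xi\in\HH^n_+$ and applying Fubini, the first piece is dominated by the target right-hand side $C[f]_{s,p,\alpha,\HH^n_+}^p$. The remaining ``tail''
\[
T:=\iint_{\xi'\in A(\xi)}\frac{|f(\xi')|^p}{d(\xi^{-1}\circ\xi')^{Q+sp}|z-z'|^\alpha}\,d\xi'\,d\xi
\]
is handled by another Fubini: $T=\int|f(\xi')|^p W(\xi')\,d\xi'$ with $W(\xi'):=\int_{\{\xi\,:\,\xi'\in A(\xi)\}}d\xi/(d(\xi^{-1}\circ\xi')^{Q+sp}|z-z'|^\alpha)$. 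Since the constraint $x_1\leq x_1'/2$ confines the reverse set to scales strictly smaller than $x_1'$, a dyadic decomposition over $x_1\in(0,x_1'/2]$, with an estimate for $W$ at each scale symmetric to that of $I(\xi)$ and the compact support of $f$ to terminate the recursion, yields the absorption of $T$ into the LHS.

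The principal difficulty is the weight comparison: matching the Heisenberg-metric weight $1/(d(\xi^{-1}\circ\xi')^{Q+sp}|z-z'|^\alpha)$ on the right with the Euclidean-type weight $1/(x_1^{sp}|z|^\alpha)$ on the left. The hypothesis $\alpha\geq (Q-2+sp)/2$ enters precisely here: integrating $d(\xi^{-1}\circ\xi')^{-(Q+sp)}$ along the $t'$-direction produces the factor $\sim|z-z'|^{-(Q-2+sp)}=|z-z'|^{-(2n+sp)}$, which is exactly the $2n$-dimensional Euclidean Hardy exponent in the $z$-plane; combining with the extra $|z-z'|^{-\alpha}$ factor, the threshold $\alpha\geq(2n+sp)/2$ is the value at which the resulting Euclidean-type weighted Hardy in $z$-space reproduces the $|z|^{-\alpha}$ weight on the left. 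Additional care is required in unpacking the non-commutative Heisenberg group law when characterizing the reverse set $\{\xi:\xi'\in A(\xi)\}$ and in ensuring that the dyadic summation converges with absorption constant strictly less than $1$.
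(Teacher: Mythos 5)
Your plan has a genuine gap that makes the absorption step fail; let me pinpoint it.

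\textbf{The tail cannot be absorbed with your shadow.} With $A(\xi)=\{\xi':c_1x_1\le d(\xi^{-1}\circ\xi')\le c_2x_1,\ x_1'\ge 2x_1\}$ and $d\mu(\xi')=d\xi'/(d(\xi^{-1}\circ\xi')^{Q+sp}|z-z'|^\alpha)$, your lower bound $I(\xi)\gtrsim x_1^{-sp-\alpha}$ is correct, but it is obtained by giving away the factor $(|z|/x_1)^\alpha\ge 1$. That lost factor returns in the tail: for $\xi$ with $\xi'\in A(\xi)$ one has $d\asymp x_1\asymp x_1'$ and $|z-z'|\asymp x_1'$, and the reverse set $\{\xi:\xi'\in A(\xi)\}$ has measure $\asymp (x_1')^Q$. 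Computing $W(\xi')$ with these size bounds gives $W(\xi')\,x_1'^{sp}|z'|^\alpha\gtrsim(|z'|/x_1')^\alpha$, which is unbounded on $\HH^n_+$. So $T$ is comparable to (in fact can dominate) the left-hand side, and there is no $\theta<1$ with $T\le\theta\int|f|^p x_1^{-sp}|z|^{-\alpha}$. There is also no free parameter in your shadow to tune the constant down; taking $x_1'\ge 2^m x_1$ forces $c_2\asymp 2^m$, and the extra factor $c_2^{Q+sp+\alpha}$ in $1/I(\xi)$ beats any gain.

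\textbf{Where the hypotheses actually enter, and what your sketch misses.} The condition $sp>1$ never appears in your plan; in the paper it is exactly what makes the iteration geometric. The paper does not average a triangle inequality: it uses Dyda's $F$-set device (see the set $F$ in \eqref{set F1}, Properties \ref{property 1}--\ref{property 3}, and Lemma \ref{lemma 2.2}). The mass on $F$ is pushed from a slab $E_1=\mathcal{A}_j\cap\mathcal{Q}_{j,k}^{i,\ell}$ to a thinner slab $E_2=\mathcal{A}_{j+m}\cap\mathcal{Q}_{j,k}^{i,\ell}$ at \emph{smaller} $x_1$ (closer to the boundary), not larger as in your $x_1'\ge 2x_1$. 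The measure ratio $|E_1|/|E_2|=2^m$ competes against the weight ratio $\sup x_1'^{sp}/\inf x_1^{sp}\asymp 2^{-sp(m-1)}$, giving the absorption constant $\gamma\asymp 2^{(1-sp)m}$, which is $<1$ only because $sp>1$ and $m$ is a free parameter. Nothing of this kind is available in the averaging scheme you propose.

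\textbf{The $\alpha$ threshold also enters differently.} Your heuristic about integrating $d^{-(Q+sp)}$ in $t'$ to produce $|z-z'|^{-(Q-2+sp)}$ does not plug into your own argument (you never integrate $t'$ over all of $\RR$ --- your shadow caps $d\le c_2x_1$). In the paper, $\alpha\ge(Q-2+sp)/2$ comes from the \emph{mixed} Heisenberg scaling in Lemma \ref{lemma 2.1}: for the dyadic cells used there one only has $d(\xi^{-1}\circ\xi')\lesssim x_1^{1/2}|z|^{1/2}$ (not $\lesssim x_1$) together with the measure bound $|E_2|\gtrsim x_1^{Q-1}|z|$, and these combine in Property \ref{property 1} to a factor $(|z|/x_1)^{\alpha-(Q-2+sp)/2}$ that is $\ge 1$ precisely when $\alpha\ge(Q-2+sp)/2$. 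Your Koranyi annulus of radius $\asymp x_1$ suppresses this mixed scaling entirely, so the hypothesis on $\alpha$ has no role in your sketch, which is another signal that the argument is not closing correctly.

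To repair the proposal you would need a decomposition whose cells reflect the anisotropy $d\asymp x_1^{1/2}|z|^{1/2}$ (with $t$-intervals of length $\asymp |z|x_1$ rather than $\asymp x_1^2$), push mass toward smaller $x_1$, and retain a dyadic gap $m$ to be chosen large using $sp>1$. That is, in essence, the paper's proof.
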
\smallskip

As in other Hardy type inequalities on $\HH^n$, here the term $|z|$ on the left hand side occurs naturally from the structure of the Heisenberg group $\HH^n$.\smallskip

Again using Dyda's technique, we provide an alternate proof of the improved version of the fractional Hardy-type inequality \eqref{eq adi}, which was presented as a particular case of Theorem 1.1 in \cite{Hari}. More precisely, we prove the following.

\begin{theorem}\label{thm 1.2}
Let $p\geq1$, $s\in(0,1)$ and $\alpha\geq0$.\\
\emph{(i)} If $sp+\alpha<Q-2$, then \eqref{eq adi} holds for all $f\in C_c(\HH^n)$.\\
\emph{(ii)} If $sp+\alpha>Q-2$, then \eqref{eq adi} holds for all $f\in C_c(\HH^n\setminus\{\xi\in\HH^n:z=0\})$.
\end{theorem}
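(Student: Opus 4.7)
The plan is to adapt Dyda's method \cite{Dyda} to the Heisenberg setting. Fix an auxiliary ``virial'' function $g:\HH^n\to(0,\infty)$ to be chosen later, and define the associated Hardy potential
$$V_g(\xi):=\frac{1}{g(\xi)^{p-1}}\,\mathrm{P.V.}\!\int_{\HH^n}\frac{g(\xi)^{p-1}-g(\eta)^{p-1}}{d(\xi^{-1}\circ\eta)^{Q+sp}\,|z-z'|^{\alpha}}\,d\eta.$$
The first step is a general reduction lemma: for every open $\Omega\subseteq\{0<g<\infty\}$ and every $f\in C_c(\Omega)$,
$$\int_{\Omega}|f|^{p}V_g\,d\xi\;\leq\;C_p\iint_{\Omega\times\Omega}\frac{|f(\xi)-f(\eta)|^{p}}{d(\xi^{-1}\circ\eta)^{Q+sp}\,|z-z'|^{\alpha}}\,d\eta\,d\xi.$$
This is the standard ``ground state'' substitution: writing $f=g\cdot(f/g)$, symmetrizing the Gagliardo-type integral in $(\xi,\eta)$, and applying a Picone-type convexity inequality yields the pointwise dominance of the left-hand side by the Gagliardo integral, after which the lemma follows by Fubini.

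Second, I would take $g(\xi)=d(\xi)^{-\beta}|z(\xi)|^{-\gamma}$ with $\beta,\gamma\geq 0$ to be determined, and establish the pointwise lower bound $V_g(\xi)\geq c/\bigl(d(\xi)^{sp}|z(\xi)|^{\alpha}\bigr)$ on $\HH^n\setminus\{z=0\}$. Both sides are homogeneous of degree $-sp-\alpha$ under the Heisenberg dilation $D_r$, so, combined with the left-invariance of the kernel, it suffices to verify the inequality on the unit gauge sphere $\{d=1\}$. I would then split the $\eta$-integral into three zones (near-diagonal, where the principal value cancels; a bulk region; and a tubular neighbourhood of the $t$-axis, where $|z-z'|^{-\alpha}$ concentrates) and estimate each contribution using the explicit Koranyi formula $d(\xi)=(|z|^{4}+t^{2})^{1/4}$. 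The exponents $\beta,\gamma$ are pinned down by the convergence and positivity of the principal-value integral; the threshold $sp+\alpha=Q-2$ emerges naturally as the convergence boundary of the integral as $|z(\eta)|\to 0$, separating the two cases of the theorem. In case (ii) one is forced to take $\gamma>0$, whence $g=+\infty$ on $\{z=0\}$ and the reduction lemma applies only to $f\in C_c(\HH^n\setminus\{z=0\})$, matching the statement of part (ii).

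Finally, in case (i), with $sp+\alpha<Q-2$, the previous two steps deliver the inequality on $C_c(\HH^n\setminus\{z=0\})$, and I would extend it to all of $C_c(\HH^n)$ by density. Under this condition the weight $d^{-sp}|z|^{-\alpha}$ is locally integrable near the $t$-axis, and the truncation $f_{\varepsilon}(\xi):=f(\xi)\chi(|z|/\varepsilon)$, with $\chi\in C^{\infty}(\R)$ vanishing on $[0,1]$ and equal to $1$ on $[2,\infty)$, approaches $f$ with both sides of the inequality passing to the limit by dominated convergence. The main obstacle is the pointwise bound on $V_g$: the principal-value integral has no closed form, and the delicate zonal estimates, in particular controlling the sign of the integrand near the $t$-axis where the non-isotropic gauge $d$ and the cylindrical weight $|z-z'|^{-\alpha}$ interact nontrivially, are what dictate both the optimal choice of $\beta,\gamma$ and the dichotomy between the two cases.
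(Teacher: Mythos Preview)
Your proposal is not Dyda's method despite the citation; what you describe is the Frank--Seiringer ground-state/Picone substitution of \cite{Frank,Frank2}. The paper instead follows Dyda's original combinatorial scheme from \cite{Dyda}: one defines a ``good set'' $G$ on which $|f(\xi)|^p$ is already dominated by a local Gagliardo average (Property~\ref{property 4}), decomposes $\HH^n\setminus\{z=0\}$ into dyadic cylindrical annuli $A_j=\{2^jr<|z|<2^{j+1}r\}$ further sliced in $t$ by intervals $I_{j}^{\ell}$, and uses a shifting lemma (Property~\ref{property 6}) to push the integral over $A_j\cap G$ onto $A_{j\pm m}$ with a contraction factor $\gamma<1$; iteration and a geometric series finish. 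No virial function, no principal-value integral, and no sign analysis enter: the threshold $sp+\alpha=Q-2$ appears purely from the volume ratio $|E_1|/|E_2|=2^{\pm(Q-2)m}$ in the shifting step, which determines whether one must iterate outward (case~(i)) or inward (case~(ii)). The method is soft and robust, at the cost of giving no information about the constant.

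Your route could in principle yield sharper constants, but as written it has a genuine gap at the decisive step. The entire content of your argument is the pointwise bound $V_g(\xi)\geq c\,d(\xi)^{-sp}|z|^{-\alpha}$, and your sketch does not establish it. Homogeneity does reduce matters to the gauge sphere, but on that sphere you must still show that a principal-value integral---with a kernel mixing the Koranyi gauge $d(\xi^{-1}\circ\eta)$ and the cylindrical weight $|z-z'|^{-\alpha}$, applied to a product of powers of $d$ and $|z|$---is bounded below by a \emph{positive} constant uniformly in the remaining angular variable. Unlike the Euclidean case there is no closed form or special-function identity available here, and your ``zonal estimates'' would have to control the sign of the integrand, not merely its magnitude; the near-diagonal cancellation you invoke contributes zero, not positivity. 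You also leave the choice of $(\beta,\gamma)$ unspecified, and it is not evident that any choice makes $V_g$ nonnegative across the full range $\alpha\geq 0$. Until this computation is actually carried out, the proof is incomplete; the paper's combinatorial argument sidesteps all of these difficulties.
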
\smallskip

In the proof of Euclidean results, Dyda used  crucially the decomposition shown in Figure 1 for the set $\{x=(x_1,x_2,\ldots,x_n)\in\RR^n:0<x_1<\rho\ \mathrm{and}\ |x_i|<\rho/2,\ i=2,\ldots,n\},$ where $\rho>0$ is arbitrary. In this figure, the base in $\RR^{n-1}$ (i.e., the projection onto $x_2,x_3, \ldots, x_n$ coordinates) is divided into dyadic cubes and the height in $x_1$-direction is scaled appropriately to allow for shifting of integrals between the cubes which have the same base in $x_2,x_3, \ldots, x_n$ coordinates, but are at different $x_1$-heights. 
We will also be using a similar decomposition adapted to the Heisenberg group case; see Section 2.

\begin{center}
\begin{tikzpicture}
\draw[thick] (-5,0) -- (5,0) node[anchor=north west] {$\RR^{n-1}$};
\draw[thick,->] (0,0) -- (0,4.5) node[anchor=south east] {$x_1$};
\draw (-4,0) rectangle (4,4);
\draw (-4,2) -- (4,2);
\draw (-4,1) -- (4,1);
\draw (-4,0.5) -- (4,0.5);
\draw (-4,0.25) -- (4,0.25);
\draw[dashed] (-2,2) -- (-2,0.25);
\draw[dashed] (2,2) -- (2,0.25);
\draw[dashed] (-3,1) -- (-3,0.25);
\draw[dashed] (-1,1) -- (-1,0.25);
\draw[dashed] (1,1) -- (1,0.25);
\draw[dashed] (3,1) -- (3,0.25);
\draw[dashed] (-3.5,0.5) -- (-3.5,0.25);
\draw[dashed] (-2.5,0.5) -- (-2.5,0.25);
\draw[dashed] (-1.5,0.5) -- (-1.5,0.25);
\draw[dashed] (-0.5,0.5) -- (-0.5,0.25);
\draw[dashed] (0.5,0.5) -- (0.5,0.25);
\draw[dashed] (1.5,0.5) -- (1.5,0.25);
\draw[dashed] (2.5,0.5) -- (2.5,0.25);
\draw[dashed] (3.5,0.5) -- (3.5,0.25);

\draw (-4.25,0.25) -- (-4.25,4);
\draw (-4.3,4) -- (-4.2,4);
\draw (-4.3,2) -- (-4.2,2);
\draw (-4.3,1) -- (-4.2,1);
\draw (-4.3,0.5) -- (-4.2,0.5);
\draw (-4.3,0.25) -- (-4.2,0.25);

\draw (-4.5,3) node{$\frac{\rho}{2}$};
\draw (-4.5,1.5) node{$\frac{\rho}{4}$};
\draw (-4.5,0.75) node{$\frac{\rho}{8}$};

\draw (-4,-0.25) -- (4,-0.25);
\draw (-4,-0.2) -- (-4,-0.3);
\draw (0,-0.2) -- (0,-0.3);
\draw (4,-0.2) -- (4,-0.3);

\draw (-2,-0.5) node{$\rho/2$};
\draw (2,-0.5) node{$\rho/2$};
\end{tikzpicture}
\end{center}
$$\mathrm{Figure}\ 1.\ n=2$$

\smallskip

The paper is organized as follows. In Section 2, we introduce a special type of decomposition of $\HH^n_+$ and use it to prove Lemma \ref{lemma 2.2}, from which Theorem \ref{thm 1.1} follows. Section 3 contains the proof of Theorem \ref{thm 1.2}. An estimate of the distance function for the half space  $\HH^n_{+,t}$ is discussed in the Appendix.

\section{Proof of the fractional Hardy's inequality in half space}
For any fixed $\xi\in\HH^n_+$ and $R_1,R_2>0$, let us define the set
$$\Sigma_{R_1,R_2}(\xi)=\{\xi'\in\HH^n:d({\xi}^{-1}\circ \xi')<R_1x_1^{1/2}|z|^{1/2}\ \ \mathrm{and}\ \ |z'-z|<R_2x_1\}.$$
Let $\mathscr{D}\subset \HH^n_+$ be any open set and $f\in C_c(\HH^n_+)$. For $R_1,R_2$ and $S>0$ we define the following set:
\begin{equation}\label{set F1}
F=F(f,\mathscr{D},R_1,R_2,S)=\left\{\xi\in\mathscr{D}:|f(\xi)|^p>\frac{2^{p+1}}{Sx_1^{Q-1}|z|}\int_{\Sigma_{R_1,R_2}(\xi)\cap\mathscr{D}}|f(\xi)-f(\xi')|^pd\xi'\right\}.
\end{equation}
Note that $F\subset\mathrm{supp}\,f$, or in particular, $\mathrm{dist}(F,\partial\HH^n_+)>0$. The role of the set $\mathscr{D}$ is to localise the setting. The following properties of the set $F$ are useful to prove our first result.
\begin{property}\label{property 1}
Let $\alpha\geq\frac{Q-2+sp}{2}$. Then we have
$$\int_{\mathscr{D}\setminus F}\frac{|f(\xi)|^p}{x_1^{sp}|z|^\alpha}d\xi\leq \frac{2^{p+1}R_1^{Q+sp}R_2^{\alpha}}{S}\int_{\mathscr{D}\setminus F}\int_{\mathscr{D}}\frac{|f(\xi)-f(\xi')|^p}{d({\xi}^{-1}\circ \xi')^{Q+sp}|z'-z|^\alpha}d\xi' d\xi.$$
\end{property}
\begin{proof}
Let $\xi\in\mathscr{D}\setminus F$. We have
\begin{multline*}
\int_{\mathscr{D}}\frac{|f(\xi)-f(\xi')|^p}{d({\xi}^{-1}\circ \xi')^{Q+sp}|z'-z|^\alpha}d\xi'\geq \int_{\Sigma_{R_1,R_2}(\xi)\cap\mathscr{D}}\frac{|f(\xi)-f(\xi')|^p}{(R_1x_1^{1/2}|z|^{1/2})^{Q+sp}(R_2x_1)^\alpha}d\xi'\\
\geq\frac{S}{2^{p+1}R_1^{Q+sp}R_2^{\alpha}}\left(\frac{|z|}{x_1}\right)^{\alpha-\frac{Q-2+sp}{2}}\frac{|f(\xi)|^p}{x_1^{sp}|z|^\alpha}\geq\frac{S}{2^{p+1}R_1^{Q+sp}R_2^{\alpha}}\frac{|f(\xi)|^p}{x_1^{sp}|z|^\alpha}.
\end{multline*}
Integration over $\mathscr{D}\setminus F$ completes the proof.
\end{proof}
\begin{property}\label{property 2}
Let $\xi\in F$. For $E\subset \Sigma_{R_1,R_2}(\xi)\cap\mathscr{D}$ we define the set
$$E^*=E^*(\xi):=\left\{\xi'\in E:\frac{1}{2}|f(\xi)|\leq |f(\xi')|\leq \frac{3}{2}|f(\xi)|\right\}.$$
If $|E|\geq Sx_1^{Q-1}|z|$, then $|E^*|\geq |E|-\frac{1}{2}Sx_1^{Q-1}|z|\geq \frac{1}{2}|E|$.
\end{property}
\begin{proof}
It is easy to observe that
$$E^*(\xi)\supseteq \left\{\xi'\in E:\frac{||f(\xi)|-|f(\xi')||}{|f(\xi)|}\leq\frac{1}{2}\right\}.$$
Therefore
\begin{multline*}
|E^*|\geq |E|-\left|\left\{\xi'\in E:\frac{||f(\xi)|-|f(\xi')||}{|f(\xi)|}>\frac{1}{2}\right\}\right|\\
\geq |E|-2^p\int_{\left\{\xi'\in E:\frac{||f(\xi)|-|f(\xi')||}{|f(\xi)|}>\frac{1}{2}\right\}}\frac{||f(\xi)|-|f(\xi')||^p}{|f(\xi)|^p}d\xi'\\
\geq |E|-\frac{2^p}{|f(\xi)|^p}\int_E|f(\xi)-f(\xi')|^pd\xi'.
\end{multline*}
The rest of the proof follows from the assumptions on $\xi$ and $E.$
\end{proof}
\begin{property}\label{property 3}
Let $E_1\subset\mathscr{D}$. If $E_2$ is any set satisfying the following: $E_2\subset \Sigma_{R_1,R_2}(\xi)\cap\mathscr{D}$ and $|E_2|\geq Sx_1^{Q-1}|z|$ for all $\xi\in E_1$, then
$$\int_{E_1\cap F}\frac{|f(\xi)|^p}{x_1^{sp}|z|^\alpha}d\xi\leq 2^{p+1}\frac{|E_1|}{|E_2|}\frac{\sup\{{x_1'}^{sp}|z'|^\alpha:\xi'\in E_2\}}{\inf\{x_1^{sp}|z|^\alpha:\xi\in E_1\}}\int_{E_2}\frac{|f(\xi')|^p}{{x_1'}^{sp}|z'|^\alpha}d\xi'.$$
\end{property}
\begin{proof}
Assume $E_1\cap F\neq\phi$. For any fixed $\eta>1$ we pick $\xi_\eta\in E_1\cap F$ such that $\underset{E_1\cap F}{\sup}|f|\leq \eta|f(\xi_\eta)|$. It follows from the definition that $\frac{1}{2}|f(\xi_\eta)|\leq |f(\xi')|$ for all $\xi'\in E_2^*(\xi_\eta)$. Therefore
\begin{multline*}
\int_{E_1\cap F}\frac{|f(\xi)|^p}{x_1^{sp}|z|^\alpha}d\xi\leq |E_1\cap F|\cdot\frac{\underset{E_1\cap F}{\sup}|f|^{p}}{\inf\{x_1^{sp}|z|^\alpha:\xi\in E_1\cap F\}}\\
\leq \frac{|E_1\cap F|}{|E_2^*(\xi_\eta)|}\frac{\sup\{{x_1'}^{sp}|z'|^\alpha:\xi'\in E_2^*(\xi_\eta)\}}{\inf\{x_1^{sp}|z|^\alpha:\xi\in E_1\cap F\}}\cdot\eta^p\int_{E_2^*(\xi_\eta)}\frac{|f(\xi_\eta)|^p}{{x'_1}^{sp}|z'|^\alpha}d\xi'\\
\leq 2^{p+1}\eta^p \frac{|E_1|}{|E_2|}\frac{\sup\{{x_1'}^{sp}|z'|^\alpha:\xi'\in E_2\}}{\inf\{x_1^{sp}|z|^\alpha:\xi\in E_1\}}\int_{E_2}\frac{|f(\xi')|^p}{{x_1'}^{sp}|z'|^\alpha}d\xi'.
\end{multline*}
Here we use the fact that $|E_2^*(\xi_\eta)|\geq \frac{1}{2}|E_2|$. We conclude the proof by sending $\eta\rightarrow1$.
\end{proof}
To prove our main result, we choose the sets $E_1$ and $E_2$ and the quantities $R_1$, $R_2$ and $S$ in such a way that they satisfy the assumptions of Property \ref{property 3}. Property \ref{property 3} helps us to shift the part of the integral on the left hand side of \eqref{eq main 1}, which is over $F$ to the integral over the complement of $F$. Property \ref{property 1} connects the integrals of both sides of \eqref{eq main 1} over the complement of $F$.\smallskip

The condition $|E|\geq Sx_1^{Q-1}|z|$ is crucial in Property \ref{property 2} and Property \ref{property 3}. To get this estimate we decompose the domain of integration as in the Dyda's  proof for the half space $\{x=(x_1,x_2,\ldots,x_n)\\ \in\RR^n:x_1>0\}$ in the Euclidean case. Dyda used dyadic decomposition for the base  and worked with the estimate $|x-y|\leq Rx_1$ for $x\in E_1$ and $y\in E_2$ and $|E_2|\geq Sx_1^n$ for some dyadic cubes $E_1$ and $E_2$, which is quite natural. In our case, if we were able to work with any decomposition for which $d({\xi}^{-1}\circ \xi')\leq Rx_1$ holds, then \eqref{eq main 1} would be true for any $\alpha\geq0$. Unfortunately, we don't have such a decomposition for $\HH^n$. For the dyadic type decomposition we use, we have instead $d({\xi}^{-1}\circ \xi')\leq Rx_1^{1/2}|z|^{1/2}$ for $\xi\in E_1$ and $\xi'\in E_2$ and $|E_2|\geq Sx_1^{Q}$, using these estimates we can prove Theorem \ref{thm 1.1} only for $\alpha\geq (Q+sp)/2$. We have slightly changed the decomposition in the $t$ direction and are able to prove Theorem \ref{thm 1.1} for $\alpha\geq (Q-2+sp)/2$. The decomposition is as follows.\smallskip

Let us define the set $\Omega_{x_1}=\{(x_2,\ldots,x_n,y,t):x_2,\ldots,x_n\in\RR,y\in\RR^n\ \mathrm{and}\ t\in\RR\}$. For a set $E\subset\Omega_{x_1}$ and for $\rho>0$ we define
\begin{equation}\label{set Q}
\mathcal{Q}(E,\rho)=\{\xi=(x,y,t)\in\HH^n_+:(x_2,\cdots,x_n,y,t)\in E\ \mathrm{and}\ \ 0<x_1<\rho\}.
\end{equation}
Let $K(\rho)=\{(x_2,\cdots,x_n,y,t):|x_i|,|y_j|<\rho/2,\ 2\leq i\leq n,\,1\leq j\leq n,\ \mathrm{and}\ |t|<\rho^2/2\}\subset\Omega_{x_1}$. We simply denote the set $\mathcal{Q}(K(\rho),\rho)$ as $\mathcal{Q}(\rho)$.\smallskip

Let $f\in C_c(\HH^n_+)$ and let $\rho$ be such that
$$\mathrm{supp}(f)\subset\{\xi\in\HH^n_+:x_1<\rho\ \mathrm{and}\ (x_2,\cdots,x_n,y,t)\in K(\rho)\}=\mathcal{Q}(\rho).$$
We write $K(\rho)=L(\rho)\times(-\rho^2/2,\rho^2/2)$, where $$L(\rho)=\{(x_2,\cdots,x_n,y_1,\cdots,y_n):|x_i|,|y_j|<\rho/2,\ 2\leq i\leq n,\,1\leq j\leq n\}.$$

\begin{center}
\begin{tikzpicture}
\draw[thick,->] (-4,0) -- (4.5,0) node[anchor=north west] {};
\draw[thick,->] (0,-4) -- (0,4.5) node[anchor=south east] {};
\draw[dashed] (-4,-3) -- (4,-3);
\draw[dashed] (-4,-2) -- (4,-2);
\draw[dashed] (-4,-1) -- (4,-1);
\draw[dashed] (-4,1) -- (4,1);
\draw[dashed] (-4,2) -- (4,2);
\draw[dashed] (-4,3) -- (4,3);
\draw[dashed] (-3,-4) -- (-3,4);
\draw[dashed] (-2,-4) -- (-2,4);
\draw[dashed] (-1,-4) -- (-1,4);
\draw[dashed] (1,-4) -- (1,4);
\draw[dashed] (2,-4) -- (2,4);
\draw[dashed] (3,-4) -- (3,4);
\draw[thick] (-1,-1) rectangle (1,1);
\draw[thick] (-2,-2) rectangle (2,2);
\draw[thick] (-3,-3) rectangle (3,3);
\draw[thick] (-4,-4) rectangle (4,4);
\draw (0.5,0.5) node{$L_{j,1}^{1}$};
\draw (0.5,-0.5) node{$L_{j,1}^{2}$};
\draw (-0.5,-0.5) node{$L_{j,1}^{3}$};
\draw (-0.5,0.5) node{$L_{j,1}^{4}$};
\draw (0.5,2.5) node{$L_{j,3}^{1}$};
\draw (1.5,2.5) node{$L_{j,3}^{2}$};
\draw (2.5,2.5) node{$\cdot$};
\draw (2.5,1.5) node{$\cdot$};
\draw (2.5,0.5) node{$\cdot$};
\draw (2.5,-0.5) node{$\cdot$};
\draw (2.5,-1.5) node{$\cdot$};
\draw (2.5,-2.5) node{$\cdot$};
\draw (1.5,-2.5) node{$\cdot$};
\draw (0.5,-2.5) node{$\cdot$};
\draw (-0.5,-2.5) node{$\cdot$};
\draw (-1.5,-2.5) node{$\cdot$};
\draw (-2.5,-2.5) node{$\cdot$};
\draw (-2.5,-1.5) node{$\cdot$};
\draw (-2.5,-0.5) node{$\cdot$};
\draw (-2.5,0.5) node{$\cdot$};
\draw (-2.5,1.5) node{$\cdot$};
\draw (-2.5,2.5) node{$\cdot$};
\draw (-1.5,2.5) node{$\cdot$};
\draw (-0.5,2.5) node{$\cdot$};
\end{tikzpicture}
\begin{tikzpicture}
\draw[thick,->] (-3,0) -- (3.5,0) node[anchor=north west] {};
\draw[thick,->] (0,-4) -- (0,4.5) node[anchor=south east] {t};
\draw[dashed] (-3,-4) -- (-3,4);
\draw[dashed] (3,-4) -- (3,4);
\draw[dashed] (-2,-4) -- (-2,4);
\draw[dashed] (-1,-4) -- (-1,4);
\draw[dashed] (1,-4) -- (1,4);
\draw[dashed] (2,-4) -- (2,4);

\draw[dashed] (-3,-3) -- (3,-3);
\draw[dashed] (-3,3) -- (3,3);
\draw[dashed] (-1,-2.5) -- (1,-2.5);
\draw[dashed] (-1,-2) -- (1,-2);
\draw[dashed] (-1,-1.5) -- (1,-1.5);
\draw[dashed] (-1,-1) -- (1,-1);
\draw[dashed] (-1,-0.5) -- (1,-0.5);
\draw[dashed] (-1,0.5) -- (1,0.5);
\draw[dashed] (-1,1) -- (1,1);
\draw[dashed] (-1,1.5) -- (1,1.5);
\draw[dashed] (-1,2) -- (1,2);
\draw[dashed] (-1,2.5) -- (1,2.5);

\draw[dashed] (-1,-3.5) -- (1,-3.5);
\draw[dashed] (-1,3.5) -- (1,3.5);

\draw[dashed] (-2,-2) -- (-1,-2);
\draw[dashed] (-2,-1) -- (-1,-1);
\draw[dashed] (-2,1) -- (-1,1);
\draw[dashed] (-2,2) -- (-1,2);
\draw[dashed] (1,-2) -- (2,-2);
\draw[dashed] (1,-1) -- (2,-1);
\draw[dashed] (1,1) -- (2,1);
\draw[dashed] (1,2) -- (2,2);

\draw[dashed] (-3,-1.5) -- (-2,-1.5);
\draw[dashed] (-3,1.5) -- (-2,1.5);
\draw[dashed] (2,-1.5) -- (3,-1.5);
\draw[dashed] (2,1.5) -- (3,1.5);

\draw (2.5,0.75) node{$E_{j,k}^{i,\ell}$};

\draw (2,0) -- (2.1,-0.22) -- (2.25,-0.22);
\draw (3,0) -- (2.9,-0.22) -- (2.7,-0.22);
\draw (2.5,-0.25) node{{\tiny $L_{j,k}^{i}$}};

\draw (3,1.5) -- (3.22,1.4) -- (3.22,1);
\draw (3,0) -- (3.22,0.1) -- (3.22,0.5);
\draw (3.25,0.75) node{{\tiny $I_k^\ell$}};
\draw[thick] (2,0) rectangle (3,1.5);
\end{tikzpicture}
\end{center}
$$\mathrm{Figure}\ 2\quad\qquad\qquad\qquad\qquad\qquad\qquad\qquad\qquad\mathrm{Figure}\ 3$$ \smallskip

\newpage For $j=0,1,2,\cdots$ we decompose $L(\rho)$ into $2^{j(2n-1)}$ sub-cubes with sides parallel to axes and each with side length $\rho/2^j$. We divide this sub-cubes into the classes $\{L_{j,1}^i\}$, $\{L_{j,2}^i\},\cdots,\{L_{j,2^{j-1}}^i\}$ in the following manner: a sub-cube $\tilde{L}$ belongs to the class $\{L_{j,k}^i\}$ if $\underset{\xi\in \tilde{L}}{\mathrm{sup}}\{|x_2|,\cdots,|x_n|,|y_1|,\cdots,|y_n|\}=\frac{k\rho}{2^j}$ (see Figure 2, where we have shown the classes for $k=1$ and $k=3$ for some fixed $j$).\smallskip

The non-isotropic guage requires dealing with  the $t-$variable differently, which we now do. Let $\ell\in\ZZ$. We consider the intervals $I_{j,k}^\ell=(\frac{\ell k\rho^2}{2^{2j}},\frac{(\ell+1)k\rho^2}{2^{2j}})$ for $k=1,2,\cdots,2^{j-1}$ along the $t$ direction and we define the sets $E_{j,k}^{i,\ell}=L_{j,k}^{i}\times I_{j,k}^\ell$ (see Figure 3). Note that, for each fixed $j=0,1,2, \ldots,$ $K(\rho)\subset\underset{k}{\cup}\big(\underset{i}{\cup}\big(\underset{\ell}{\cup}E_{j,k}^{i,\ell}\big)\big)$ upto a Lebesgue measure zero set.\smallskip

We denote by $\mathcal{Q}_{j,k}^{i,\ell}$, the domains $\mathcal{Q}(E_{j,k}^{i,\ell},\rho)$ in $\HH^n_+$ of same height $\rho$ in $x_1$ direction. Furthermore, for $j=0,1,\cdots$ we define the sets $\mathcal{A}_j=\mathcal{Q}(K(\rho),\rho/2^j)\setminus\mathcal{Q}(K(\rho),\rho/2^{j+1})$. Note that, $\mathcal{A}_j$'s are mutually disjoint and $\cup_{j=0}^\infty\mathcal{A}_j=\mathcal{Q}(\rho)$. Therefore the sets $\{\mathcal{A}_j\cap\mathcal{Q}_{j,k}^{i,\ell}\}$ are mutually disjoint upto a Lebesgue measure zero set, whose union over $\ell$, $i$, $k$ and $j$ respectively gives $\mathcal{Q}(\rho)$.\smallskip

The proof of Theorem \ref{thm 1.1} uses the following two lemmas. Lemma \ref{lemma 2.1} gives an important estimate for $d({\xi}^{-1}\circ \xi')$ when $\xi$ and $\xi'$ belong respectively to two sets at different heights in the $x_1$ direction but with the same base in the rest of the directions. The proof of Theorem \ref{thm 1.1} then follows from Lemma \ref{lemma 2.2}.\smallskip

\begin{lemma}\label{lemma 2.1}
Let $E_1=\mathcal{A}_j\cap\mathcal{Q}_{j,k}^{i,\ell}$ and $E_2=\mathcal{A}_{j+m}\cap\mathcal{Q}_{j,k}^{i,\ell}$ as defined above. Then $E_2\subset \Sigma_{R_1,R_2}(\xi)\cap\mathcal{Q}(\rho)$ and $|E_2|\geq Sx_1^{Q-1}|z|$ for all $\xi\in E_1$, where $R_1=2\sqrt{2}(n^2+n+2)^{\frac{1}{4}}+1$,  $R_2=2\sqrt{2n}+1$ and $S=2^{-m-1}/\sqrt{2n}$. In other words, $E_1$ and $E_2$ satisfy the assumptions of Property \ref{property 3}.
\end{lemma}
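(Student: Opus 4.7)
The plan is to verify the two conditions of Property \ref{property 3} by explicit coordinate-wise bounds adapted to the decomposition. For $\xi\in E_1$ the first coordinate satisfies $\rho/2^{j+1}<x_1\le\rho/2^j$, while for $\xi'\in E_2$ correspondingly $\rho/2^{j+m+1}<x_1'\le\rho/2^{j+m}$; the remaining coordinates of $\xi$ and $\xi'$ both lie in the common slab $L_{j,k}^i\times I_{j,k}^\ell$, a cube of side $\rho/2^j$ (with supremum of the $|x_p|,|y_q|$ on it equal to $k\rho/2^j$) times a $t$-interval of length $k\rho^2/2^{2j}$. From this, each of the $2n$ coordinate differences composing $z-z'$ has modulus less than $\rho/2^j$, whence $|z-z'|\le\sqrt{2n}\,\rho/2^j$. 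Since $x_1>\rho/2^{j+1}$, this gives $|z-z'|<2\sqrt{2n}\,x_1<R_2 x_1$, handling the second condition defining $\Sigma_{R_1,R_2}(\xi)$.

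The main work will lie in the Kor\'anyi--Folland gauge. Writing $\xi^{-1}\circ\xi'=(u,v,\tau)$ with $u=x'-x$, $v=y'-y$ and $\tau=(t'-t)-2\langle y,u\rangle+2\langle x,v\rangle$, I would first record $|u|^2+|v|^2=|z-z'|^2\le 2n(\rho/2^j)^2$. The cube bounds give $|x|,|y|\le\sqrt{n}\,k\rho/2^j$ and $|u|,|v|\le\sqrt{n}\,\rho/2^j$, so by Cauchy--Schwarz each inner product is at most $nk(\rho/2^j)^2$; combined with $|t-t'|<k\rho^2/2^{2j}$ from $I_{j,k}^\ell$ this yields $|\tau|\le(1+4n)k(\rho/2^j)^2$. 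Hence $d(\xi^{-1}\circ\xi')^4=|z-z'|^4+\tau^2\le C_n\,k^2(\rho/2^j)^4$ for an explicit $C_n$. To convert this into $d<R_1 x_1^{1/2}|z|^{1/2}$ I would split cases: when $k=1$, the trivial inequality $|z|\ge x_1$ yields $x_1^{1/2}|z|^{1/2}\ge x_1>\rho/2^{j+1}$; when $k\ge2$, the cube $L_{j,k}^i$ contains at least one of the $(x_2,\ldots,y_n)$--coordinates of magnitude $\ge(k-1)\rho/2^j$, so $|z|\ge(k-1)\rho/2^j$, and the inequality $k/(k-1)\le2$ absorbs the $k$-dependence. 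Choosing $R_1$ as stated then accommodates both cases simultaneously.

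The volume bound is direct: $E_2$ is a Cartesian product of an $x_1$-interval of length $\rho/2^{j+m+1}$, the cube $L_{j,k}^i\subset\RR^{2n-1}$ of volume $(\rho/2^j)^{2n-1}$, and the $t$-interval $I_{j,k}^\ell$ of length $k\rho^2/2^{2j}$, giving $|E_2|=k(\rho/2^j)^Q/2^{m+1}$; and for $\xi\in E_1$ one has $x_1^{Q-1}|z|\le(\rho/2^j)^{Q-1}\cdot\sqrt{2n}\,k\rho/2^j$, so $|E_2|/(x_1^{Q-1}|z|)\ge 1/(2^{m+1}\sqrt{2n})=S$. The hard part is really the $\tau$-estimate: the cross terms $\langle y,u\rangle$ and $\langle x,v\rangle$ encode the non-abelian Heisenberg twist and forbid the Euclidean-style bound $d\le Rx_1$ that Dyda exploited; it is precisely to match these cross terms that the $t$-direction is decomposed into intervals of length proportional to $k$ (rather than a uniform $\rho^2/2^{2j}$), so that $|t-t'|$ is of the same order as the twist and one recovers the sharper $d\le R x_1^{1/2}|z|^{1/2}$, which in turn is what drives the threshold $\alpha\ge(Q-2+sp)/2$ in Theorem \ref{thm 1.1}.
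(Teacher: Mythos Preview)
Your overall strategy matches the paper's: bound $|z-z'|$ coordinate-wise, control the twist in the gauge, split into $k=1$ and $k\ge2$, and compute $|E_2|$ as a product. The estimate $|z-z'|<R_2x_1$ and the volume bound $|E_2|\ge Sx_1^{Q-1}|z|$ are both carried out correctly, exactly as in the paper.

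The gap is quantitative and lies in the gauge estimate. You bound the cross terms separately via $|\langle y,u\rangle|\le|y|\,|u|\le nk(\rho/2^j)^2$ (and likewise for $\langle x,v\rangle$), obtaining $|\tau|\le(1+4n)k(\rho/2^j)^2$ and hence $d^4\le C_nk^2(\rho/2^j)^4$ with $C_n=4n^2+(1+4n)^2$. After $k\le2(k-1)$, $(k-1)\rho/2^j\le|z|$, $\rho/2^j<2x_1$ and $x_1\le|z|$ this yields $d^4<16C_n\,x_1^2|z|^2=(320n^2+128n+16)\,x_1^2|z|^2$. For $n\ge2$ this exceeds the stated $R_1^4$ (e.g.\ $n=2$ gives $1552$ versus $R_1^4\approx1098$), so the claim that ``choosing $R_1$ as stated accommodates both cases'' is false. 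The paper avoids this loss by the identity
\[
\langle y,x'\rangle-\langle x,y'\rangle=\bigl\langle(y,-x),\,z'-z\bigr\rangle,
\]
which bounds the twist directly by $|z|\,|z'-z|$ and gives $\tau^2\le2(t'-t)^2+8|z|^2|z'-z|^2$. Keeping a factor $|z|$ explicit, rather than replacing it by $\sqrt{2n}\,k\rho/2^j$ and then converting back through $k\le2(k-1)$, eliminates the extraneous factor and yields $d^4\le 64(n^2+n+2)\,x_1^2|z|^2$, which is exactly what $R_1=2\sqrt{2}(n^2+n+2)^{1/4}+1$ is tailored to. Your argument does establish the lemma with \emph{some} $R_1'(n)$ depending only on $n$, which is enough for Theorem~\ref{thm 1.1}; but it does not prove the lemma with the specific constant stated.
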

\begin{proof}
Clearly $E_1,E_2\subset\mathcal{Q}(\rho)$. Let $\xi\in E_1$. We have $\frac{\rho}{2^{j+1}}<x_1\leq\frac{\rho}{2^{j}}$, $\frac{(k-1)\rho}{2^{j+1}}<|z|\leq \sqrt{2n}\frac{k\rho}{2^{j}}$ if $k\geq2$ and $\frac{\rho}{2^{j+1}}<|z|\leq\sqrt{2n}\frac{\rho}{2^{j}}$ if $k=1$. Now, for any $\xi'\in E_2$ we have
\begin{multline*}
d({\xi}^{-1}\circ \xi')^4\leq|z'-z|^4+2(t'-t)^2+8(\langle y,x'\rangle-\langle x,y'\rangle)^2\\
=|z'-z|^4+2(t'-t)^2+8(\langle (y,-x),z'-z\rangle)^2\\
\leq|z'-z|^4+2(t'-t)^2+8|z|^2|z'-z|^2\qquad\quad\\
\leq4n^2\left(\frac{\rho}{2^j}\right)^4+2\left(\frac{k\rho^2}{2^{2j}}\right)^2+16n\left(\frac{\rho}{2^j}\right)^2|z|^2\\
\leq(4n^2+2k^2)\left(\frac{\rho}{2^j}\right)^4+16n\left(\frac{\rho}{2^j}\right)^2|z|^2.
\end{multline*}
For $k=1$,
$$d({\xi}^{-1}\circ \xi')^4\leq2^4(4n^2+2)\left(\frac{\rho}{2^{j+1}}\right)^4+2^4\cdot4n\left(\frac{\rho}{2^{j+1}}\right)^2|z|^2\leq 2^4(4n^2+4n+2)x_1^2|z|^2,$$
and for $k\geq2$, using the fact $k\leq2(k-1)$ we estimate
\begin{multline*}
d({\xi}^{-1}\circ \xi')^4\leq(4n^2+8(k-1)^2)\left(\frac{\rho}{2^j}\right)^4+16n\left(\frac{\rho}{2^j}\right)^2|z|^2\\
\leq2^4(4n^2+8)\left(\frac{\rho}{2^{j+1}}\right)^2\left(\frac{(k-1)\rho}{2^{j+1}}\right)^2+2^4\cdot4n\left(\frac{\rho}{2^{j+1}}\right)^2|z|^2\leq2^4(4n^2+4n+8)x_1^2|z|^2.
\end{multline*}
Therefore, we have, for any $\xi\in E_1$ and for any $\xi'\in E_2$,
$$d({\xi}^{-1}\circ \xi')\leq 2\sqrt{2}(n^2+n+2)^{\frac{1}{4}}x_1^{\frac{1}{2}}|z|^{\frac{1}{2}}<R_1x_1^{\frac{1}{2}}|z|^{\frac{1}{2}}.$$
Also, $|z'-z|\leq \sqrt{2n}\frac{\rho}{2^j}<R_2x_1$. These together imply $E_2\subset \Sigma_{R_1,R_2}(\xi)\cap\mathcal{Q}(\rho)$ for all $\xi\in E_1$.\smallskip

It is easy check that $|E_1|=2^m|E_2|$, and for $k\geq1$,
$$|E_2|=\frac{\rho}{2^{j+m+1}}\left(\frac{\rho}{2^j}\right)^{2n-1}\frac{k\rho^2}{2^{2j}}\geq\frac{2^{-m-1}}{\sqrt{2n}}x_1^{Q-1}|z|=Sx_1^{Q-1}|z|.$$
\end{proof}\smallskip

\begin{lemma}\label{lemma 2.2}
Let $p\geq1$ and $s\in(0,1)$ be such that $sp>1$ and $\alpha\geq (Q-2+sp)/2$. Then there exists a constant $C=C(n,p,s,\alpha)>0$ such that
\begin{equation}\label{eq 2.3}
\int_{\mathcal{Q}(\rho)}\frac{|f(\xi)|^p}{x_1^{sp}|z|^\alpha}d\xi\leq C\int_{\mathcal{Q}(\rho)}\int_{\mathcal{Q}(\rho)}\frac{|f(\xi)-f(\xi')|^p}{d({\xi}^{-1}\circ \xi')^{Q+sp}|z'-z|^\alpha}d\xi'd\xi\ \ \forall\,f\in C_c(\HH^n_+).
\end{equation}
\end{lemma}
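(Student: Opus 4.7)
\emph{Proof plan.} The strategy is the Dyda-style scheme foreshadowed just before the lemma. Let $f\in C_c(\HH^n_+)$ be given and fix $\rho$ so that $\mathrm{supp}(f)\subset\mathcal{Q}(\rho)$; since $\mathrm{dist}(\mathrm{supp}(f),\partial\HH^n_+)>0$ and $|z|\geq x_1$ on $\HH^n_+$, the left-hand side integral $\int_{\mathcal{Q}(\rho)}|f|^p/(x_1^{sp}|z|^\alpha)\,d\xi$ is finite a priori. Set $\mathscr{D}=\mathcal{Q}(\rho)$ and take the constants $R_1,R_2,S$ from Lemma \ref{lemma 2.1} with a free parameter $m\in\NN$ to be chosen later; form the set $F=F(f,\mathcal{Q}(\rho),R_1,R_2,S)$. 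Under the standing hypothesis $\alpha\geq(Q-2+sp)/2$, Property \ref{property 1} immediately gives
\begin{equation*}
\int_{\mathcal{Q}(\rho)\setminus F}\frac{|f(\xi)|^p}{x_1^{sp}|z|^\alpha}\,d\xi\;\leq\;C_1(m)\int_{\mathcal{Q}(\rho)}\int_{\mathcal{Q}(\rho)}\frac{|f(\xi)-f(\xi')|^p}{d(\xi^{-1}\circ\xi')^{Q+sp}|z-z'|^\alpha}\,d\xi'\,d\xi,
\end{equation*}
so only the integral over $F$ remains to be controlled.

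To handle $\int_F|f|^p/(x_1^{sp}|z|^\alpha)\,d\xi$, I use the cell decomposition introduced in the preceding paragraphs: for each $(j,k,i,\ell)$, set $E_1=\mathcal{A}_j\cap\mathcal{Q}_{j,k}^{i,\ell}$ and $E_2=\mathcal{A}_{j+m}\cap\mathcal{Q}_{j,k}^{i,\ell}$. Lemma \ref{lemma 2.1} provides exactly the containment $E_2\subset\Sigma_{R_1,R_2}(\xi)\cap\mathcal{Q}(\rho)$ and the volume bound $|E_2|\geq Sx_1^{Q-1}|z|$ needed to invoke Property \ref{property 3}. The three ratios in Property \ref{property 3} are then readily computed: $|E_1|/|E_2|=2^m$ (only the $x_1$-range differs); $\sup_{E_2}{x_1'}^{sp}/\inf_{E_1}x_1^{sp}\leq 2^{-(m-1)sp}$ from the two $x_1$-strips; and $\sup_{E_2}|z'|^\alpha/\inf_{E_1}|z|^\alpha\leq(2\sqrt{2n})^\alpha$ uniformly in $j,k$. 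The last bound is where one has to separate $k=1$ (use $|z|\geq x_1>\rho/2^{j+1}$ on $E_1$ and $|z'|\leq\sqrt{2n}\rho/2^j$ on $E_2$) from $k\geq 2$ (use the annular lower bound $|z|\geq(k-1)\rho/2^j\geq k\rho/2^{j+1}$ on $E_1$ and $|z'|\leq\sqrt{2n}\,k\rho/2^j$ on $E_2$). Plugging these into Property \ref{property 3} yields the cell-by-cell estimate
\begin{equation*}
\int_{E_1\cap F}\frac{|f|^p}{x_1^{sp}|z|^\alpha}\,d\xi\;\leq\;C_0\,2^{m(1-sp)}\int_{E_2}\frac{|f|^p}{{x_1'}^{sp}|z'|^\alpha}\,d\xi',
\end{equation*}
with $C_0$ depending only on $n,p,s,\alpha$ (and independent of $j,k,i,\ell,m$).

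Summing over $(k,i,\ell)$ at fixed $j$, the base cells $\mathcal{Q}_{j,k}^{i,\ell}$ partition $\mathcal{Q}(\rho)$ up to null sets, so the $E_1$'s assemble (disjointly, a.e.) to $\mathcal{A}_j$ and the $E_2$'s to $\mathcal{A}_{j+m}$; summing then over $j\geq 0$ and using $\bigsqcup_j\mathcal{A}_j=\mathcal{Q}(\rho)$ gives
\begin{equation*}
\int_F\frac{|f|^p}{x_1^{sp}|z|^\alpha}\,d\xi\;\leq\;C_0\,2^{m(1-sp)}\int_{\mathcal{Q}(\rho)}\frac{|f|^p}{x_1^{sp}|z|^\alpha}\,d\xi.
\end{equation*}
Since $sp>1$, I fix $m$ large enough that $C_0\,2^{m(1-sp)}\leq 1/2$; combining with the Property \ref{property 1} bound from the first paragraph and splitting $\int_{\mathcal{Q}(\rho)}=\int_F+\int_{\mathcal{Q}(\rho)\setminus F}$, the $F$-piece is absorbed into the left-hand side (here the a priori finiteness noted above is essential) and \eqref{eq 2.3} follows with $C=2C_1(m)$. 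The main technical obstacle I anticipate is the uniform control of the $|z|^\alpha$-ratio in Property \ref{property 3}, which is precisely why the $t$-direction had to be scaled by the factor $k\rho^2/2^{2j}$ (rather than $\rho^2/2^{2j}$) in the decomposition and why the cases $k=1$ and $k\geq 2$ must be treated separately.
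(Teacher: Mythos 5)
Your proof is correct and follows essentially the same route as the paper: the same dyadic cell decomposition, Lemma \ref{lemma 2.1} to verify the hypotheses of Property \ref{property 3}, a cell-by-cell estimate with a $2^{m(1-sp)}$ gain summed over all cells, and Property \ref{property 1} to finish. The only cosmetic difference is that you close the argument by absorption (using a priori finiteness of the left side), whereas the paper iterates the estimate to termination and sums a geometric series to reach the equivalent bound $\int_{\mathscr{D}}\leq\frac{1}{1-\gamma}\int_{\mathscr{D}\setminus F}$; your slightly sharper lower bound $|z|\geq(k-1)\rho/2^j$ also gives $2\sqrt{2n}$ in place of the paper's $4\sqrt{2n}$, which is immaterial.
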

\begin{proof}
Let $f\in C_c(\HH^n_+)$ and $\mathscr{D}=\mathcal{Q}(\rho)$. We consider the set $F=F(f,\mathscr{D},R_1,R_2,S)$ defined in \eqref{set F1} with $R_1=2\sqrt{2}(n^2+n+2)^{\frac{1}{4}}+1$,  $R_2=2\sqrt{2n}+1$ and $S=2^{-m-1}/\sqrt{2n}$. The number $m$ will be determined later.\smallskip

Fix $j=0,1,\cdots$ and let $E_1=\mathcal{A}_j\cap\mathcal{Q}_{j,k}^{i,\ell}$ and $E_2=\mathcal{A}_{j+m}\cap\mathcal{Q}_{j,k}^{i,\ell}$. 
Lemma \ref{lemma 2.1} says that $E_1$ and $E_2$ satisfy the assumptions of Property \ref{property 3}.
We have
$$\frac{\sup\{|z'|:\xi'\in E_2\}}{\inf\{|z|:\xi\in E_1\}}=\begin{cases}
\sqrt{2n}\frac{\rho}{2^j}/{\frac{\rho}{2^{j+1}}}=2\sqrt{2n} & \text{if $k=1$},\\
\\
\sqrt{2n}\frac{k\rho}{2^j}/{\frac{(k-1)\rho}{2^{j+1}}}\leq4\sqrt{2n} & \text{if $k\geq2$}.
\end{cases}$$
Using this in Property \ref{property 3} we get
\begin{multline}\label{eq 2.4}
\int_{(\mathcal{A}_j\cap\mathcal{Q}_{j,k}^{i,\ell})\cap F}\frac{|f(\xi)|^p}{x_1^{sp}|z|^\alpha}d\xi\\
\leq 2^{p+1}2^m\left(\frac{\frac{\rho}{2^{j+m}}}{\frac{\rho}{2^{j+1}}}\right)^{sp}(4\sqrt{2n})^\alpha\int_{\mathcal{A}_{j+m}\cap\mathcal{Q}_{j,k}^{i,\ell}}\frac{|f(\xi')|^p}{{x_1'}^{sp}|z'|^\alpha}d\xi'\\
=\gamma\int_{\mathcal{A}_{j+m}\cap\mathcal{Q}_{j,k}^{i,\ell}}\frac{|f(\xi')|^p}{{x_1'}^{sp}|z'|^\alpha}d\xi',
\end{multline}
where $\gamma=(\sqrt{2n})^\alpha2^{p+1+sp+2\alpha}2^{(1-sp)m}$. We choose $m$ to be large enough so that $\gamma<1$. Note that $\gamma$ is independent of j, this is crucial for iterating the argument. Summation over $\ell$, $i$ and $k$ respectively gives
\begin{equation}\label{eq 2.5}
\int_{\mathcal{A}_j\cap F}\frac{|f(\xi)|^p}{x_1^{sp}|z|^\alpha}d\xi\leq\gamma\int_{\mathcal{A}_{j+m}}\frac{|f(\xi')|^p}{{x_1'}^{sp}|z'|^\alpha}d\xi'.
\end{equation}
Next, we have
$$\int_{\mathcal{A}_{j+m}}\frac{|f(\xi')|^p}{{x_1'}^{sp}|z'|^\alpha}d\xi'=\left(\int_{\mathcal{A}_{j+m}\setminus F}+\int_{\mathcal{A}_{j+m}\cap F}\right)\frac{|f(\xi')|^p}{{x_1'}^{sp}|z'|^\alpha}d\xi',$$
and we iterate \eqref{eq 2.5} by replacing $j$ with $j+m,j+2m,\cdots,j+n_jm$ until the set $A_{j+n_jm}\cap F$ becomes empty. Therefore we obtain
\begin{multline*}
\int_{\mathscr{D}}\frac{|f(\xi)|^p}{{x_1}^{sp}|z|^\alpha}d\xi=\left(\int_{\mathscr{D}\setminus F}+\int_{\mathscr{D}\cap F}\right)\frac{|f(\xi)|^p}{{x_1}^{sp}|z|^\alpha}d\xi=\left(\int_{\mathscr{D}\setminus F}+\sum_{j=0}^\infty\int_{\mathcal{A}_j\cap F}\right)\frac{|f(\xi)|^p}{{x_1}^{sp}|z|^\alpha}d\xi\\
\leq \int_{\mathscr{D}\setminus F}\frac{|f(\xi)|^p}{{x_1}^{sp}|z|^\alpha}d\xi+\sum_{j=0}^\infty\left(\sum_{k=1}^\infty\gamma^k\int_{\mathcal{A}_{j+km}\setminus F}\frac{|f(\xi')|^p}{{x_1'}^{sp}|z'|^\alpha}d\xi'\right)\\
=\int_{\mathscr{D}\setminus F}\frac{|f(\xi)|^p}{{x_1}^{sp}|z|^\alpha}d\xi+\sum_{k=1}^\infty\gamma^k\cdot\sum_{j=0}^\infty\int_{\mathcal{A}_{j+km}\setminus F}\frac{|f(\xi')|^p}{{x_1'}^{sp}|z'|^\alpha}d\xi'\\
\leq \left(\sum_{k=0}^\infty\gamma^k\right)\int_{\mathscr{D}\setminus F}\frac{|f(\xi')|^p}{{x_1'}^{sp}|z'|^\alpha}d\xi'.
\end{multline*}
Applying Property \ref{property 1}, we get
$$\int_{\mathscr{D}}\frac{|f(\xi)|^p}{{x_1}^{sp}|z|^\alpha}d\xi\leq \frac{1}{1-\gamma}\frac{2^{p+1}R_1^{Q+sp}R_2^\alpha}{S}\int_{\mathscr{D}}\int_{\mathscr{D}}\frac{|f(\xi)-f(\xi')|^p}{d({\xi}^{-1}\circ \xi')^{Q+sp}|z'-z|\alpha}d\xi' d\xi.$$
This completes the proof.
\end{proof}

\begin{proof}[\textbf{Proof of Theorem \ref{thm 1.1}}]
Let $f\in C_c(\HH^n_+)$. We choose $\rho$ large enough such that $\mathrm{supp}(f)\subset\mathcal{Q}(\rho)$. Then \eqref{eq main 1} follows from \eqref{eq 2.3}. To be precise, inequality \eqref{eq 2.3} is stronger than \eqref{eq main 1}.
\end{proof}

\section{Alternate proof of the fractional Hardy's inequality}
Let $f\in C_c(\HH^n)$, $\mathscr{D}\subset \HH^n$ and $R,S>0$. Similar to \eqref{set F1}, we define the set
\begin{equation}\label{set F2}
G=G(f,\mathscr{D},R,S)=\left\{\xi\in\mathscr{D}:|f(\xi)|^p>\frac{2^{p+1}}{S|z|^Q}\int_{B(\xi,R|z|)\cap\mathscr{D}}|f(\xi)-f(\xi')|^pd\xi'\right\}.
\end{equation}
The set $G$ satisfies the following properties, which are similar to Property \ref{property 1}-\ref{property 3} of the set $F$ defined in \eqref{set F1}. The proofs are identical to those of Property \ref{property 1}-\ref{property 3}. We omit the proofs.
\begin{property}\label{property 4}
For $\alpha\geq0$ we have
$$\int_{\mathscr{D}\setminus G}\frac{|f(\xi)|^p}{|z|^{sp+\alpha}}d\xi\leq \frac{2^{p+1}R^{Q+sp+\alpha}}{S}\int_{\mathscr{D}\setminus G}\int_{\mathscr{D}}\frac{|f(\xi)-f(\xi')|^p}{d({\xi}^{-1}\circ \xi')^{Q+sp}|z'-z|^\alpha}d\xi' d\xi.$$
\end{property}

\begin{property}\label{property 5}
Let $\xi\in G$. For $E\subset B(\xi,R|z|)\cap\mathscr{D}$ we define the set
$$E^*(\xi):=\left\{\xi'\in E:\frac{1}{2}|f(\xi)|\leq |f(\xi')|\leq \frac{3}{2}|f(\xi)|\right\}.$$
If $|E|\geq S|z|^Q$, then $|E^*|\geq |E|-\frac{1}{2}S|z|^Q\geq \frac{1}{2}|E|$.
\end{property}

\begin{property}\label{property 6}
Let $E_1\subset\mathscr{D}$. If $E_2$ be a set such that $E_2\subset B(\xi,R|z|)\cap\mathscr{D}$ and $|E_2|\geq S|z|^Q$ for all $\xi\in E_1$, then
$$\int_{E_1\cap G}\frac{|f(\xi)|^p}{|z|^{sp+\alpha}}d\xi\leq 2^{p+1}\frac{|E_1|}{|E_2|}\left(\frac{\sup\{|z'|:\xi'\in E_2\}}{\inf\{|z|:\xi\in E_1\}}\right)^{sp+\alpha}\int_{E_2}\frac{|f(\xi')|^p}{|z'|^{sp+\alpha}}d\xi'.$$
\end{property}\smallskip

Let $r>0$ be fixed. For $j,k=0,1,2,\cdots$ and for $\ell\in\ZZ$ we define the sets
$$J_j^{k,\ell}=\{\xi\in\HH^n:2^jr<|z|<2^{j+1}r\ \mathrm{and}\ \ell\,2^{2k}r^2<t<(\ell+1)2^{2k}r^2\},$$
and $A_j=B_z(0,2^{j+1}r)\setminus B_z(0,2^{j}r)$. Note that, for any fixed $j$ and $k$, $\underset{\ell\in\ZZ}{\cup}J_j^{k,\ell}=A_j$ upto a Lebesgue measure zero set. The following two lemmas are the main ingredient to prove Theorem \ref{thm 1.2}.

\begin{lemma}\label{lemma 3.1}
Let $p\geq1$, $s\in(0,1)$ and $\alpha\geq0$ be such that $sp+\alpha<Q-2$ and let $r>0$. Then there exists a constant $C=C(n,p,s,\alpha)>0$ such that
\begin{equation}\label{eq 3.2}
\int_{B_z(0,r)^C}\frac{|f(\xi)|^p}{|z|^{sp+\alpha}}d\xi\leq C\int_{B_z(0,r)^C}\int_{B_z(0,r)^C}\frac{|f(\xi)-f(\xi')|^p}{d({\xi}^{-1}\circ \xi')^{Q+sp}|z'-z|^\alpha}d\xi' d\xi\ \ \forall\,f\in C_c(\HH^n).
\end{equation}
\end{lemma}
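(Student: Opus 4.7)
The plan is to mirror the proof of Lemma \ref{lemma 2.2}, replacing Properties \ref{property 1}--\ref{property 3} with Properties \ref{property 4}--\ref{property 6} and letting the $|z|$-direction play the role of the $x_1$-direction. I would take $\mathscr{D}=B_z(0,r)^C$, note the (almost) disjoint decomposition $\mathscr{D}=\bigcup_{j\geq 0} A_j$, and further slice each annulus by $A_j=\bigcup_{\ell\in\ZZ} J_j^{j,\ell}$ with the specific choice $k=j$. This choice is the analog of matching the $t$-strip width to $|z|^2$ in Section~2; it makes each tile $J_j^{j,\ell}$ have $t$-width $(2^jr)^2$ and volume comparable to $|z|^Q$, which is exactly what Property~\ref{property 6} asks for.

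The comparison pair I would set up is
\[
E_1 = J_j^{j,\ell},\qquad E_2 = J_{j+m}^{j,\ell},
\]
i.e.\ a shift outward in $|z|$ by $m$ dyadic scales while keeping the same $t$-slab (paralleling the shift in $x_1$ used for Lemma~\ref{lemma 2.1}). To verify the hypotheses of Property \ref{property 6} I would estimate $d(\xi^{-1}\circ\xi')$ using the identity $\langle y,x'\rangle-\langle x,y'\rangle = \langle(y,-x),\,z'-z\rangle$ (as in Lemma~\ref{lemma 2.1}), combined with the bounds $|z|\leq 2^{j+1}r$, $|t'-t|\leq 2^{2j}r^2$, and $|z'-z|\leq 2^{j+m+2}r$. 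This yields $d(\xi^{-1}\circ\xi')\leq C_1\cdot 2^m|z|$ uniformly for $\xi\in E_1$, $\xi'\in E_2$, which fixes $R=C_1\cdot 2^m$, while the elementary volume computation gives $|E_2|\sim 2^{2nm}|z|^Q$, so $|E_2|\geq S|z|^Q$ for a constant $S=S(n,m)$.

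Property~\ref{property 6} then yields
\[
\int_{E_1\cap G}\frac{|f(\xi)|^p}{|z|^{sp+\alpha}}\,d\xi \;\leq\; \gamma \int_{E_2}\frac{|f(\xi')|^p}{|z'|^{sp+\alpha}}\,d\xi',
\qquad \gamma = C(n,p,s,\alpha)\cdot 2^{(m+1)(sp+\alpha)}\cdot 2^{-2nm} = C'\cdot 2^{-m(Q-2-sp-\alpha)}.
\]
The hypothesis $sp+\alpha<Q-2$ is exactly what renders the exponent negative, so $m$ can be chosen so large that $\gamma<1$. Summing the bound over $\ell\in\ZZ$ collapses $\bigcup_\ell J_{j+m}^{j,\ell}=A_{j+m}$ up to null sets and gives $\int_{A_j\cap G}\leq \gamma\int_{A_{j+m}}$. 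From here the telescoping is identical to Lemma~\ref{lemma 2.2}: split $A_{j+m}=(A_{j+m}\setminus G)\cup(A_{j+m}\cap G)$, iterate on the $G$-part (the cascade terminates for each starting $j$ because $G\subset\mathrm{supp}(f)$ is bounded), and conclude
\[
\int_{\mathscr{D}}\frac{|f|^p}{|z|^{sp+\alpha}}\,d\xi \;\leq\; \frac{1}{1-\gamma}\int_{\mathscr{D}\setminus G}\frac{|f|^p}{|z|^{sp+\alpha}}\,d\xi.
\]
A final application of Property~\ref{property 4} converts the right-hand side to the double integral in \eqref{eq 3.2}.

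The step that demands the most care is the choice $k=j$ in the slicing. Any coarser choice of the $t$-partition (e.g.\ $k<j$) would make the condition $|E_2|\geq S|z|^Q$ fail uniformly in $j$ for fixed $m$, forcing $m$ to grow with $j$ and breaking the iteration; a finer choice would waste measure. With $k=j$, the measure ratio $|E_1|/|E_2|=2^{-2nm}$ exactly balances the weight loss $2^{m(sp+\alpha)}$, producing the critical threshold $Q-2=2n$. This is the precise analog of the refined $t$-decomposition used in Section~2 to sharpen $\alpha\geq(Q+sp)/2$ to $\alpha\geq(Q-2+sp)/2$.
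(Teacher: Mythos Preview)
Your proposal is correct and matches the paper's proof essentially line for line: the paper also takes $\mathscr{D}=B_z(0,r)^C$, sets $E_1=J_j^{j,\ell}$, $E_2=J_{j+m}^{j,\ell}$, verifies $E_2\subset B(\xi,R|z|)\cap\mathscr{D}$ with $R$ of order $2^m$ and $|E_2|\geq S|z|^Q$, obtains $\gamma=2^{p+1+sp+\alpha}\,2^{(sp+\alpha-Q+2)m}$ from Property~\ref{property 6}, sums over $\ell$ to reach $\int_{A_j\cap G}\leq\gamma\int_{A_{j+m}}$, and then iterates and applies Property~\ref{property 4} exactly as in Lemma~\ref{lemma 2.2}. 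Your explanation of why the choice $k=j$ is forced is also on point.
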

\begin{proof}
Let $\mathscr{D}=B_z(0,r)^C$. Let us consider the set $G=G(f,\mathscr{D},R,S)$ as \eqref{set F2} with $R=11^\frac{1}{4}2^{m+1}$ and $S=2^{Q(m-1)-2m}|B_{2n}(0,2)\setminus B_{2n}(0,1)|$, where $B_{2n}(0,r)$ is the ball in $\RR^{2n}$ centered at origin with radius $r$. The number $m$ will be determined later.\smallskip

Let $j\in\{0,1,2,\cdots\}$ be fixed and let $E_1=J_j^{j,\ell}$ and $E_2=J_{j+m}^{j,\ell}$. We first check that $E_1$ and $E_2$ satisfy the assumptions of Property \ref{property 6}.\\
Clearly $E_1,E_2\subset\mathscr{D}$. Let $\xi\in E_1$. We have $2^jr<|z|<2^{j+1}r$, and for any $\xi'\in E_2$,
\begin{multline}\label{eq 3.3}
d({\xi}^{-1}\circ \xi')\leq(|z'-z|^4+2(t'-t)^2+8|z|^2|z'-z|^2)^\frac{1}{4}\\
\leq ((2^{j+m+1}r)^4+2(2^{2j}r^2)^2+8(2^{j+m+1}r)^2(2^{j+m+1}r)^2)^\frac{1}{4}<11^\frac{1}{4}2^{j+m+1}r<R|z|,
\end{multline}
which implies $E_2\subset B(\xi,R|z|)\cap\mathscr{D}$ for all $\xi\in E_1$.\\
Also, one can easily check that $|E_1|=2^{-(Q-2)m}|E_2|$ and
$$|E_2|=(2^{j+m}r)^{Q-2}|B_{2n}(0,2)\setminus B_{2n}(0,1)|\,2^{2j}r^2=S(2^{j+1}r)^Q>S|z|^Q.$$
Hence from property \ref{property 6} we get,
$$\int_{J_j^{j,\ell}\cap G}\frac{|f(\xi)|^p}{|z|^{sp+\alpha}}d\xi\\
\leq 2^{p+1} 2^{-(Q-2)m}\left(\frac{2^{j+m+1}}{2^j}\right)^{sp+\alpha}\int_{J_{j+m}^{j,\ell}}\frac{|f(\xi')|^p}{|z'|^{sp+\alpha}}d\xi'=\gamma\int_{J_{j+m}^{j,\ell}}\frac{|f(\xi')|^p}{|z'|^{sp+\alpha}}d\xi',$$
where $\gamma=2^{(sp+\alpha-Q+2)m}\,2^{p+1+sp+\alpha}$. We choose $m$ to be large enough so that $\gamma<1$. Summation over $\ell$ gives
\begin{equation}
\int_{A_j\cap G}\frac{|f(\xi)|^p}{|z|^{sp+\alpha}}d\xi\leq\gamma\int_{A_{j+m}}\frac{|f(\xi')|^p}{|z'|^{sp+\alpha}}d\xi'.
\end{equation}
The rest of the proof proceeds analogously to the argument concluding Lemma \ref{lemma 2.2}.
\end{proof}

\begin{lemma}\label{lemma 3.2}
Let $p\geq1$, $s\in(0,1)$ and $\alpha\geq0$ be such that $sp+\alpha>Q-2$ and let $r>0$. Then there exists a constant $C=C(n,p,s,\alpha)>0$ and a natural number $m=m(n,p,s)$ such that
\begin{equation}\label{eq 3.5}
\int_{B^C}\frac{|f(\xi)|^p}{|z|^{sp+\alpha}}d\xi\leq C\left(\int_{B^C}\int_{B^C}\frac{|f(\xi)-f(\xi')|^p}{d({\xi}^{-1}\circ \xi')^{Q+sp}|z'-z|^\alpha}d\xi' d\xi+\int_{A}\frac{|f(\xi)|^p}{|z|^{sp+\alpha}}d\xi\right)\ \ \forall\,f\in C_c(\HH^n),
\end{equation}
where $B=B_z(0,2^mr)$ and $A=B_z(0,2^mr)\setminus B_z(0,r)$.
\end{lemma}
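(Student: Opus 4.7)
The plan is to adapt the proof of Lemma \ref{lemma 3.1} with the direction of the dyadic iteration reversed. In Lemma \ref{lemma 3.1}, the strict inequality $sp+\alpha<Q-2$ is what makes the factor obtained by shifting integrals \emph{outward} from $A_j$ to $A_{j+m}$ a contraction $\gamma<1$; for Lemma \ref{lemma 3.2} the opposite inequality $sp+\alpha>Q-2$ flips the sign, so the contraction only appears when one shifts \emph{inward} from $A_j$ to $A_{j-m}$. The inward iteration can proceed only as long as the target annulus $A_{j-m}$ stays inside the domain of integration; the finitely many innermost annuli $A_0,\ldots,A_{m-1}$ that form the base case are precisely those making up $A=B_z(0,2^mr)\setminus B_z(0,r)$, which accounts for the extra term $\int_A|f|^p/|z|^{sp+\alpha}\,d\xi$ appearing on the right-hand side of \eqref{eq 3.5}.

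Concretely, I would take $\mathscr{D}=B_z(0,r)^C$ so that Property \ref{property 6} can be invoked cleanly for every $A_j\subset B^C$, and define $G=G(f,\mathscr{D},R,S)$ exactly as in \eqref{set F2} with $R$ a geometric constant and $S$ proportional to $2^{-2nm}$. For each $j\geq m$ and each $\ell\in\ZZ$ apply Property \ref{property 6} with $E_1=J_j^{j,\ell}$ and $E_2=J_{j-m}^{j,\ell}$. The distance check reuses the computation in \eqref{eq 3.3} almost verbatim, yielding $d(\xi^{-1}\circ\xi')\leq R|z|$ for $\xi\in E_1$, $\xi'\in E_2$; the volume count gives $|E_1|/|E_2|=2^{2nm}$ and $|E_2|\geq S|z|^Q$. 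One then obtains
\[
\int_{A_j\cap G}\frac{|f(\xi)|^p}{|z|^{sp+\alpha}}\,d\xi\leq\gamma\int_{A_{j-m}}\frac{|f(\xi')|^p}{|z'|^{sp+\alpha}}\,d\xi',\qquad\gamma=C(n,p,s,\alpha)\cdot 2^{-(sp+\alpha-(Q-2))m},
\]
and the assumption $sp+\alpha>Q-2$ lets us fix $m=m(n,p,s)$ so large that $\gamma<1$.

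Summing over $j\geq m$ and reindexing $j'=j-m$, the shifted integrals decompose into a piece with $j'\geq m$, which recovers $\int_{B^C}|f|^p/|z|^{sp+\alpha}\,d\xi$, and a piece with $0\leq j'<m$, which assembles $\int_A|f|^p/|z|^{sp+\alpha}\,d\xi$. Splitting $\int_{B^C}=\int_{B^C\setminus G}+\sum_{j\geq m}\int_{A_j\cap G}$ and absorbing gives
\[
(1-\gamma)\int_{B^C}\frac{|f|^p}{|z|^{sp+\alpha}}\,d\xi\leq\int_{B^C\setminus G}\frac{|f|^p}{|z|^{sp+\alpha}}\,d\xi+\gamma\int_A\frac{|f|^p}{|z|^{sp+\alpha}}\,d\xi,
\]
so that one final invocation of Property \ref{property 4} replaces $\int_{B^C\setminus G}|f|^p/|z|^{sp+\alpha}$ by a constant multiple of the fractional seminorm and divides through by $1-\gamma$ to reach \eqref{eq 3.5}.

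The main obstacle I foresee is reconciling the domain of the seminorm produced by Property \ref{property 4} with the domain $B^C\times B^C$ appearing on the right-hand side of \eqref{eq 3.5}. With $\mathscr{D}=B_z(0,r)^C$ as above, Property \ref{property 4} dominates $\int_{B^C\setminus G}|f|^p/|z|^{sp+\alpha}$ by the seminorm over $\mathscr{D}\times\mathscr{D}$, which is strictly larger than the seminorm over $B^C\times B^C$. The two can be matched either by rerunning the argument with $\mathscr{D}=B^C$ directly — which requires an \emph{ad hoc} treatment of the ``transition'' annuli $m\leq j<2m$, since for these $j$'s the natural target $J_{j-m}^{j,\ell}$ escapes $B^C$ (one natural substitute is $E_2=J_m^{j,\ell}$, whose contribution has to be folded into the $\int_A$-term or re-absorbed) — or by showing that the excess pieces $\int_{A\times A}$ and $\int_{B^C\times A}\cup\int_{A\times B^C}$ of the seminorm are controlled by $\int_A|f|^p/|z|^{sp+\alpha}$ using the coarse estimates $d(\xi^{-1}\circ\xi')\gtrsim|z|$ and $|z'-z|\gtrsim|z|$ valid when $\xi\in B^C$, $\xi'\in A$. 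I expect the former route to be the cleaner one and the main bookkeeping of the proof.
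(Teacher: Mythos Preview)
Your plan is exactly the paper's proof. The paper also takes $\mathscr{D}=B_z(0,r)^C$, sets $E_1=J_j^{j,\ell}$ and $E_2=J_{j-m}^{j,\ell}$ for $j\ge m$, verifies the hypotheses of Property~\ref{property 6} (with $R=2\cdot11^{1/4}$ and $S=2^{2m-Q(m+1)}|B_{2n}(0,2)\setminus B_{2n}(0,1)|$, so indeed $S\sim 2^{-2nm}$), and obtains $\int_{A_j\cap G}\le\gamma\int_{A_{j-m}}$ with $\gamma=2^{(Q-2-sp-\alpha)m}\,2^{p+1+sp+\alpha}$. The only cosmetic difference is that the paper iterates this estimate all the way down to the base annuli $A_0,\dots,A_{m-1}$ and then sums the resulting geometric series, whereas your one-step absorption $(1-\gamma)\int_{B^C}\le\int_{B^C\setminus G}+\gamma\int_A$ is an equivalent and slightly slicker bookkeeping that lands on the same bound.

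The obstacle you anticipate is real, and you should know that the paper does \emph{not} resolve it: after invoking Property~\ref{property 4} the paper simply writes the seminorm over $B^C\times B^C$, even though with $\mathscr{D}=B_z(0,r)^C$ the pointwise argument behind Property~\ref{property 4} only yields the inner integral over $B(\xi,R|z|)\cap\mathscr{D}\subset B_z(0,r)^C$, which is in general strictly larger than $B^C$. So neither of your proposed fixes is carried out in the paper; strictly speaking the inequality the argument produces has the seminorm over $B^C\times B_z(0,r)^C$ (or $B_z(0,r)^C\times B_z(0,r)^C$) rather than $B^C\times B^C$. This discrepancy is immaterial for the only use of the lemma, namely Theorem~\ref{thm 1.2}(ii), where the seminorm is immediately enlarged to $\HH^n\times\HH^n$ and $r$ is chosen so small that $\mathrm{supp}(f)\cap B_z(0,2^mr)=\emptyset$; but your instinct that something is being swept under the rug is correct.
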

\begin{proof}
As in the proof of previous lemma, we assume $f\in C_c(\HH^n)$ and $\mathscr{D}=B_z(0,r)^C$. We consider the set $G=G(f,\mathscr{D},R,S)$ with $R=2\cdot11^{\frac{1}{4}}$ and $S=2^{2m-Q(m+1)}|B_{2n}(0,2)\setminus B_{2n}(0,1)|$. The number $m$ will be determined later. For $j,k=0,1,2,\cdots$ and for $\ell\in\ZZ$, consider the sets $J_j^{k,\ell}$ and $A_j$ as in Lemma \ref{lemma 3.1}. \smallskip

Let $j\geq m$ be fixed and let $E_1=J_j^{j,\ell}$ and $E_2=J_{j-m}^{j,\ell}$. Then  $E_1,E_2\subset\mathscr{D}$. For any $\xi\in E_1$, we have $2^jr<|z|<2^{j+1}r$, and for any $\xi'\in E_2$,
\begin{multline}
d({\xi}^{-1}\circ \xi')\leq(|z'-z|^4+2(t'-t)^2+8|z|^2|z'-z|^2)^\frac{1}{4}\\
\leq ((2^{j+1}r)^4+2(2^{2j}r^2)^2+8(2^{j+1}r)^2(2^{j+1}r)^2)^\frac{1}{4}<11^\frac{1}{4}2^{j+1}r<R|z|,
\end{multline}
which implies $E_2\subset B(\xi,R|z|)\cap\mathscr{D}$ for all $\xi\in E_1$.\\
Also, one can easily check that $|E_1|=2^{(Q-2)m}|E_2|$ and
$$|E_2|=(2^{j-m}r)^{Q-2}|B_{2n}(0,2)\setminus B_{2n}(0,1)|\,2^{2j}r^2=S(2^{j+1}r)^Q>S|z|^Q.$$
Therefore, by Property \ref{property 6} we get
$$\int_{J_j^{j,\ell}\cap G}\frac{|f(\xi)|^p}{|z|^{sp+\alpha}}d\xi\leq 2^{p+1}2^{(Q-2)m}\left(\frac{2^{j-m+1}}{2^j}\right)^{sp+\alpha}\int_{J_{j-m}^{j,\ell}}\frac{|f(\xi')|^p}{|z'|^{sp+\alpha}}d\xi'=\gamma\int_{J_{j-m}^{j,\ell}}\frac{|f(\xi')|^p}{|z'|^{sp+\alpha}}d\xi',$$
where $\gamma=2^{(Q-2-sp-\alpha)m}2^{p+1+sp+\alpha}$. We choose $m$ to be large enough so that $\gamma<1$. Summation over $\ell$ gives
\begin{equation}\label{eqn 4.7}
\int_{A_j\cap G}\frac{|f(\xi)|^p}{|z|^{sp+\alpha}}d\xi\leq\gamma\int_{A_{j-m}}\frac{|f(\xi')|^p}{|z'|^{sp+\alpha}}d\xi'.
\end{equation}
Next, we have
$$\int_{A_{j-m}}\frac{|f(\xi')|^p}{|z'|^{sp+\alpha}}d\xi'=\left(\int_{A_{j-m}\setminus G}+\int_{A_{j-m}\cap G}\right)\frac{|f(\xi')|^p}{|z'|^{sp+\alpha}}d\xi',$$
and we iterate \eqref{eqn 4.7} by replacing $j$ with $j-m,j-2m,\cdots,j-k_jm$ until $0\leq j-k_jm<m$. Note that, $k_j=k$ for $j\in\{km,km+1,\ldots,km+m-1\}$ and $\cup_{j=km}^{km+m-1}A_{j-k_jm}\subset A$. Therefore we obtain
\begin{multline*}
\int_{B^C}\frac{|f(\xi)|^p}{|z|^{sp+\alpha}}d\xi=\left(\int_{B^C\setminus G}+\sum_{j=m}^\infty\int_{A_j\cap G}\right)\frac{|f(\xi)|^p}{|z|^{sp+\alpha}}d\xi\\
\leq \int_{B^C\setminus G}\frac{|f(\xi)|^p}{|z|^{sp+\alpha}}d\xi+\sum_{j=m}^\infty\left(\sum_{k=1}^{k_j-1}\gamma^k\int_{A_{j-km}\setminus G}\frac{|f(\xi')|^p}{|z'|^{sp+\alpha}}d\xi'+\gamma^{k_j}\int_{A_{j-k_jm}\setminus G}\frac{|f(\xi')|^p}{|z'|^{sp+\alpha}}d\xi'\right)\\
\leq \int_{B^C\setminus G}\frac{|f(\xi)|^p}{|z|^{sp+\alpha}}d\xi+\left(\sum_{k=1}^\infty\gamma^k\right)\sum_{j=m}^\infty\int_{A_{j-km}\setminus G}\frac{|f(\xi')|^p}{|z'|^{sp+\alpha}}d\xi'+\left(\sum_{k=1}^\infty\gamma^k\right)\int_{A\setminus G}\frac{|f(\xi')|^p}{|z'|^{sp+\alpha}}d\xi'\\
=\left(\sum_{k=0}^\infty\gamma^k\right)\int_{B^C\setminus G}\frac{|f(\xi')|^p}{|z'|^{sp+\alpha}}d\xi'+\left(\sum_{k=1}^\infty\gamma^k\right)\int_{A\setminus G}\frac{|f(\xi')|^p}{|z'|^{sp+\alpha}}d\xi'.
\end{multline*}
Applying Property \ref{property 4}, we get
$$\int_{B^C}\frac{|f(\xi)|^p}{|z|^{sp+\alpha}}d\xi\leq \frac{1}{1-\gamma}\frac{2^{p+1}R^{Q+sp+\alpha}}{S}\int_{B^C}\int_{B^C}\frac{|f(\xi)-f(\xi')|^p}{d({\xi}^{-1}\circ \xi')^{Q+sp}|z'-z|^\alpha}d\xi' d\xi+\frac{\gamma}{1-\gamma}\int_{A}\frac{|f(\xi')|^p}{|z'|^{sp+\alpha}}d\xi'.$$
This completes the proof.
\end{proof}

\begin{proof}[\textbf{Proof of Theorem \ref{thm 1.2}}]
We first consider the case $sp+\alpha<Q-2$ and assume $f\in C_c(\HH^n)$. By Lemma \ref{lemma 3.1}, we get for any $r>0$,
\begin{multline}\label{eqn 4.8}
\int_{B_z(0,r)^C}\frac{|f(\xi)|^p}{d(\xi)^{sp}|z|^{\alpha}}d\xi\leq \int_{B_z(0,r)^C}\frac{|f(\xi)|^p}{|z|^{sp+\alpha}}d\xi\\
\leq C\int_{B_z(0,r)^C}\int_{B_z(0,r)^C}\frac{|f(\xi)-f(\xi')|^p}{d({\xi}^{-1}\circ \xi')^{Q+sp}|z'-z|^\alpha}d\xi' d\xi\\
\leq C\int_{\HH^n}\int_{\HH^n}\frac{|f(\xi)-f(\xi')|^p}{d({\xi}^{-1}\circ \xi')^{Q+sp}|z'-z|^{\alpha}}d\xi'd\xi.
\end{multline}
Inequality \eqref{eq adi} follows by passing to the limit $r\rightarrow0$ in \eqref{eqn 4.8}.\smallskip

Next, we consider the case $sp+\alpha>Q-2$. Assume that $f\in C_c(\HH^n\setminus\{\xi\in\HH^n:z=0\})$. Consider the constant $m$ that appears in Lemma \ref{lemma 3.2}, which is independent of $r$. We choose $r$ small enough such that $\mathrm{supp}(f)\cap B_z(0,2^mr)=\phi$. Then \eqref{eq adi} follows from \eqref{eq 3.5}.
\end{proof}

\section{Appendix}
\subsection{An estimate of the distance function}
Let $\xi=(x,y,t)\in\HH^n_{+,t}$ and consider the point $\xi'=(x,y,0)\in\partial\HH^n_{+,t}$. Clearly, $d({\xi}^{-1}\circ \xi')=\sqrt{t}$, which says that $\delta_{\HH^n_{+,t}}(\xi)\leq\sqrt{t}$.\\
Next, for any point $\xi=(x,y,t)\in\HH^n_{+,t}$ with $x_i\neq0$ and $y_i\neq0$, $i=1,2,\cdots,n$, we consider the point $\xi_0=\left(x_1-t/{4ny_1},\ldots,x_n-t/{4ny_n},y_1+t/{4nx_1},\ldots,y_n+t/{4nx_n},0\right)\in\partial\HH^n_{+,t}.$
We have
\begin{multline*}
d({\xi}^{-1}\circ \xi_0)^4=\left(\sum_{i=1}^n\left(\frac{t}{4ny_i}\right)^2+\sum_{i=1}^n\left(\frac{t}{4nx_i}\right)^2\right)^2\\
+\left(-t-2\left\langle y,\left(x_1-\frac{t}{4ny_1},\cdots,x_n-\frac{t}{4ny_n}\right)\right\rangle+2\left\langle x,\left(y_1+\frac{t}{4nx_1},\cdots,y_n+\frac{t}{4nx_n}\right)\right\rangle\right)^2\\
=\left(\frac{t}{4n}\right)^4\left(\sum_{i=1}^n\left(\frac{1}{x_i^2}+\frac{1}{y_i^2}\right)\right)^2.
\end{multline*}
Therefore,
\begin{equation}
\delta_{\HH^n_{+,t}}(\xi)\leq \frac{t}{4n}\left(\sum_{i=1}^n\left(\frac{1}{x_i^2}+\frac{1}{y_i^2}\right)\right)^{\frac{1}{2}}.
\end{equation}
In particular, $\delta_{\HH^n_{+,t}}(\xi)<\sqrt{t}$, if $\xi$ belongs to the set
\begin{equation}\label{set Omega}
\Omega=\{\xi=(x,y,t)\in\HH^n:0<t<1,\ \ |x_i|,|y_i|>1/2\sqrt{2n},\ \ i=1,\cdots,n\}.
\end{equation}\smallskip

\textbf{Acknowledgements:} We acknowledge the Department of Mathematics and Statistics at IIT Kanpur for their invaluable support in providing a research-friendly environment. This work is part of doctoral thesis of the second author. He is grateful for the support provided by IIT Kanpur, India and MHRD, Government of India (GATE fellowship).

\end{document}